\documentclass[11pt]{article}
\usepackage[a4paper, margin=1in]{geometry}
\usepackage{enumitem}
\usepackage[english]{babel}
\usepackage{stmaryrd}
\usepackage[utf8]{inputenc}
\usepackage{amsmath}
\usepackage{amsfonts}
\usepackage{amssymb}
\usepackage{url}
\usepackage{graphicx}
\usepackage{float}
\usepackage{amsthm}
\usepackage{mathtools}
\theoremstyle{plain}
\newtheorem{theorem}{Theorem}
\newtheorem{proposition}{Proposition}

\newtheorem{lemma}{Lemma}

\newtheorem{conjecture}{Conjecture}
\newtheorem{remark}{Remark}

\newtheorem{definition}{Definition}
\numberwithin{equation}{section}
\makeatletter
\def\blfootnote{\gdef\@thefnmark{}\@footnotetext}
\makeatother

\title{On the class-breadth conjecture for $p>2$ -groups\blfootnote{
This research was supported by the Russian Science Foundation under grant no. 22-11-00075.}}
\author{Alexander A. Skutin \\{\small
Faculty of Mechanics and Mathematics
of Lomonosov Moscow State University
}
\\{\small Moscow Center for Fundamental and Applied Mathematics}
\\
{\normalsize a.skutin@mail.ru}}
\date{}

\begin{document}
\maketitle

\begin{abstract}
    The class-breadth conjecture of Leedham-Green, Neumann and Wiegold states that the nilpotency class of any $p$-group is at most $b(G) + 1$, where $\displaystyle{b(G) = \max_{g\in G}\log_p[G:Z_G(g)]}$ denotes the breadth of $G$. While several counter-examples to this conjecture have been found for $p = 2$, it is still open in general for $p>2$. This article is dedicated to the general case $p>2$ of the conjecture. We propose a generalization for the case $p>2$, which we prove under some additional conditions.
\end{abstract}

\section{Introduction}

Let $G$ be a finite $p$-group. The breadth $b(x)$ of an element $x$ of a group $G$ is defined as $b(x) := \log_p|G:Z_G(x)|$, where $Z_G(x)$ denotes the centralizer of $x$ in $G$. The breadth $b(G)$ of $G$ is defined as $\displaystyle{b(G) := \max_{g\in G}b(g)}$.

C.~Leedham-Green, P.~Neumann, and J.~Wiegold in \cite{1} stated the following conjecture: $\operatorname{cl}(G)\leq b(G) + 1$ for all $p$-groups $G$ ($\operatorname{cl}(G)$ denotes the nilpotency class of $G$).

W.~Felsch, J.~Neubüser, and W.~Plesken in \cite{2} provided a series of counterexamples to the conjecture for $p = 2$. Further counterexamples were found by B.~Eick, M.~Newman, and E.~O'Brien in \cite{3}. In \cite{3}, they also proved that the analogue of the conjecture for finite-dimensional nilpotent Lie algebras is false for all finite fields; i.e., for every finite field $\mathbb{F}$, there exists a finite-dimensional nilpotent Lie algebra $\mathfrak{g}$ over $\mathbb{F}$ such that $\operatorname{cl}(\mathfrak{g}) > b(\mathfrak{g}) + 1$, where $b(\mathfrak{g}) := \max_{a\in\mathfrak{g}}(\dim \mathfrak{g} - \dim Z_{\mathfrak{g}}(a))$ and $Z_{\mathfrak{g}}(a) := \{x\in\mathfrak{g} \mid [a, x] = 0\}$. 

The conjecture has been proved in some special cases; see, for example: if $b(G)\leq 4$ in \cite{7, 8}, if $b(G)\leq p + 1$ in \cite{4}, if $G$ is a $p$-group of maximal class in \cite{6}, if $G$ is metabelian in \cite{1}, and if $G$ is not covered by its two-step centralizers in \cite{1}. See also \cite{3} for more classes of $p$-groups satisfying the conjecture. A more detailed description of the results related to the class-breadth conjecture can be found in the introduction of \cite{2}.

In this paper, we study the general case $p>2$ of the conjecture.

\medskip\noindent
\textbf{Notation and conventions}. For any $p$-group $G$, any of its subgroups $H, H_1, H_2,\ldots, H_n$, any subset $S\subseteq G$, and any elements $a, b\in G$, we denote:

\begin{itemize}
    \item $[a, b] := aba^{-1}b^{-1}$,
    \item $a^b := bab^{-1}$,
    \item $H^a := aHa^{-1}$
    \item $H_1H_2\ldots H_n := \langle \cup_{i=1}^nH_i\rangle$ (it should be noted that this notation differs from the standard one, in particular, $H_1H_2\ldots H_n = H_{\sigma(1)}H_{\sigma(2)}\ldots H_{\sigma(n)}$ for each permutation $\sigma\in S_n$),
    \item $[H_1, H_2,\ldots, H_n] := \left\langle\left\{ [[\ldots [[h_1, h_2], h_3],\ldots ], h_n] \mid h_i\in H_i, 1\leq i\leq n\right\}\right\rangle$,
    \item $\langle S\rangle$ -- the subgroup of $G$ generated by $S$,
    \item $\langle\!\langle S\rangle\!\rangle^H := \langle\{s^h\ \vert\ h\in H, s\in S\}\rangle$ -- the normal closure of $S$ in $\langle S\rangle H$,
    \item $[a, S] := \{[a, s] \mid s\in S\}$,
    \item $[S, a] := \{[s, a] \mid s\in S\}$,
    \item $\llbracket H_1, H_2,\ldots, H_n\rrbracket^H := \langle\!\langle\left\{ [[\ldots [[h_1, h_2], h_3],\ldots ], h_n] \mid h_i\in H_i, 1\leq i\leq n\right\}\rangle\!\rangle^H$,
    \item $N_G(H)$ -- the normalizer of $H$ in $G$,
    \item $Z_G(H) := \cap_{h\in H}Z_G(h)$,
    \item $Z_i(G)$ -- the $i$-th term of the upper central series of $G$ ($Z_0(G) = \{e\}, Z_1(G) = Z(G)$),
    \item $\gamma_i(G)$ -- the $n$-th term of the lower central series of $G$ ($\gamma_1(G) = G, \gamma_2(G) = G'$),
    \item $G^{(i)}$ -- the $i$-th derived subgroup of $G$ ($G^{(0)} = G$, $G^{(1)} = G'$),
    \item $d(G)$ -- the derived length of $G$,
    \item for each element $h\in H$, denote $b_H(h) := \log_p[H : H\cap Z_G(h)]$,
    \item by a maximal subgroup $M$ of $G$, we mean a proper subgroup not contained in any other proper subgroup of $G$; such a subgroup is always normal in $G$ and has index $p$ in $G$.
\end{itemize}

In Section \ref{section2}, we introduce the necessary notions regarding $G$-groups.

In Section \ref{section3}, we introduce a strengthened class-breadth conjecture for $G$-groups (see Conjecture \ref{conj1}). For this conjecture, we make some additional assumptions and state Theorem \ref{th1}.

In Section \ref{section4}, we prove Theorem \ref{th1}.

\section*{Acknowledgments}

I am grateful to Y. V. Novikova and Huawei Moscow Research Center, and to the Theoretical Physics and Mathematics Advancement Foundation “BASIS”.

\section{$G$-Groups and Relative Structures}\label{section2}

Let $p\geq 2$ be any prime number and $G$ be any $p$-group. By a $G$-group, we will denote any $p$-group $P$ such that $G\subseteq Aut(P)$, endowed with the natural action of $G$ via automorphisms (unless specified otherwise, when $G$ is a $p$-group, any $G$-group is assumed to be a $p$-group for the same prime $p$).

Consider any $G$-group $P$, let $P_1 := G\ltimes P$ denotes the respective semidirect product.
    
\begin{definition}
    For each subgroup $H$ of $P$, we say that $H$ is a $G$-invariant subgroup of $P$ if $g\cdot h\in H$ for all $g\in G$ and all $h\in H$, which is equivalent to $[G, H]\subseteq H$ ($G, H\subseteq P_1$). Therefore, each $G$-invariant subgroup $H$ of $P$ is a $G$-group, equipped with the restricted action of $G$ on $H$.
\end{definition}

\begin{definition}\label{defx}
    For each $G$-group $P$, define $\gamma_2^G(P) := P'[G, P]$.
\end{definition}

\begin{definition}
    For each $G$-group $P$, denote $\gamma_1^G(P) := P$ and $\gamma_i^G(P) := \gamma_2^G(\gamma_{i-1}^G(P))$ for each $i\geq 2$. We will call $\gamma_i^G(P)$ the $G$-lower central series of $P$. Define \begin{equation*}
        \operatorname{cl}^G(P) := \min \{k \mid \gamma_{k + 1}^G(P) = \{e\}\}.
    \end{equation*}
    
    We will call $\operatorname{cl}^G(P)$ the $G$-nilpotency class of $P$.
\end{definition}

\begin{definition}
Let $P$ be a $G$-group. A finite sequence of subgroups $C_i$, $0\leq i\leq N$ of $P$ such that $C_0 = P$ and $C_N = \{e\}$ is called a $G$-central series of $P$ if, for each $0 \le i \le N-1$, the following hold: \begin{enumerate}
    \item $C_{i+1} \subseteq C_i$,
    \item $[G, C_i] \subseteq C_{i+1}$.\end{enumerate}
\end{definition}

The following two propositions are straightforward, and we omit their proofs.

\begin{proposition}\label{propositionone}
    For each $G$-group $P$ and each its $G$-central series $C_i$, $0\leq i\leq N$:\begin{enumerate}
        \item $C_i$ are $G$-invariant subgroups of $P$,
        \item $\gamma_i^G(P)\subseteq C_{i-1}$, $\forall 1\leq i\leq N + 1$.
    \end{enumerate}
\end{proposition}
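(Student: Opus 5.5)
Part 1 follows directly from the definitions, working inside $P_1 = G\ltimes P$; part 2 goes by induction on $i$ along the recursive definition of $\gamma_i^G$.

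\textbf{Part 1.} Fix $0\le i\le N-1$, $g\in G$ and $h\in C_i$. In $P_1$ the action of $g$ on $h$ is conjugation, so
\[
g\cdot h = ghg^{-1} = [g,h]\,h .
\]
Here $[g,h]\in[G,C_i]\subseteq C_{i+1}\subseteq C_i$ and $h\in C_i$, hence $g\cdot h\in C_i$. For $i=N$ the group $C_N=\{e\}$ is trivially $G$-invariant. This is exactly the criterion $[G,C_i]\subseteq C_i$ from the definition of $G$-invariance.

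\textbf{Part 2.} I would induct on $i$.

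For $i=1$ we have $\gamma_1^G(P)=P=C_0$.

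Now assume $\gamma_i^G(P)\subseteq C_{i-1}$ for some $i\le N$. By definition,
\[
\gamma_{i+1}^G(P)=\gamma_i^G(P)'\,[G,\gamma_i^G(P)].
\]
The second factor is handled by monotonicity of commutator subgroups:
\[
[G,\gamma_i^G(P)]\subseteq [G,C_{i-1}]\subseteq C_i .
\]
For the first factor I would want
\[
\gamma_i^G(P)'\subseteq [C_{i-1},C_{i-1}]\subseteq C_i .
\]
Since $C_i$ is a subgroup, the generated subgroup $\gamma_{i+1}^G(P)$ would then lie in $C_i$, which completes the induction.

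\textbf{Main obstacle.} The inclusion $[C_{i-1},C_{i-1}]\subseteq C_i$ is the crux, and it does not follow from the two axioms as literally stated, because they only control $[G,C_i]$. Indeed, with $G$ trivial, $N=1$ and $P$ nonabelian, the series $C_0=P\supseteq C_1=\{e\}$ satisfies both axioms, yet $\gamma_2^G(P)=P'\neq\{e\}=C_1$. So part 2 holds only under an additional hypothesis of one of the following kinds:
\begin{itemize}
\item the series is also required to be central in the ordinary sense, $[P,C_i]\subseteq C_{i+1}$;
\item $G$ contains $\mathrm{Inn}(P)$, in which case $[P,C_i]\subseteq[G,C_i]\subseteq C_{i+1}$;
\item the commutators in the axiom are read with $GP\subseteq P_1$ in place of $G$.
\end{itemize}
Under any of these, $[C_{i-1},C_{i-1}]\subseteq[P,C_{i-1}]\subseteq C_i$, and the induction goes through as described. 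I would adopt the first version, adding $[P,C_i]\subseteq C_{i+1}$ to the definition, as the intended reading of the definition of a $G$-central series.
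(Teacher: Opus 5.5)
Your proof of Part 1 is correct, and so is your diagnosis of Part 2. With $G=\{e\}$, $P$ nonabelian and $C_0=P\supseteq C_1=\{e\}$, both axioms hold because $[G,C_0]=\{e\}$. Yet $\gamma_2^G(P)=P'\not\subseteq C_1$. So Part 2 is false under the paper's literal definition of a $G$-central series. The paper gives no proof (it calls the proposition straightforward), so there is no argument to compare against. Your induction on $i$ is evidently the intended one, and you have correctly identified the hidden hypothesis $C_{i-1}'\subseteq C_i$ it needs. Under each of your three amendments the induction is complete and correct.

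On which repair to adopt: the weakest one that makes your induction work is to require only $\gamma_2^G(C_i)=C_i'[G,C_i]\subseteq C_{i+1}$, that is, to add $C_i'\subseteq C_{i+1}$ rather than $[P,C_i]\subseteq C_{i+1}$. This version is preferable for three reasons.
\begin{itemize}
\item It makes the $G$-lower central series itself the fastest-descending $G$-central series, consistent with Proposition~\ref{prop1}. For $G=\{e\}$ one gets abelian series and derived length. For $G=\operatorname{Int}(P)$ it reduces to ordinary central series, since $C_i'\subseteq[P,C_i]=[G,C_i]$.
\item It is what the series constructed in the proof of Theorem~\ref{th1} actually satisfy. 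For example, in Case 2, $\gamma_2^G(\gamma_2^G(P))\subseteq H'[G,H]=\gamma_2^G(H)$ because $\gamma_2^G(P)\subseteq H$.
\item Your preferred condition $[P,C_i]\subseteq C_{i+1}$ is \emph{not} established there. In Case 1 of the non-cyclic case, $C_i=K_i$ is inherited from the $G$-group $H$, so at best $[H,K_i]\subseteq K_{i+1}$ is available, not $[P,K_i]\subseteq K_{i+1}$.
\end{itemize}

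Finally, for the case that matters for the class-breadth conjecture in Proposition~\ref{proptwo}, namely $G=\operatorname{Int}(P)$, your second bullet shows the literal definition already suffices. What fails without a repair is only the general inequality $\operatorname{cl}^G(P)\leq\mathcal{M}^G(P)+1$ drawn there.
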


\begin{proposition}\label{prop1}
    In the case $G = \{e\}$, we have that for each $G$-invariant subgroup $H$ of $P$, $\gamma_2^G(H) = H'$, $\gamma_i^G(H) = H^{(i-1)}$, and $\operatorname{cl}^G(H) = d(H)$. In the case $G = \operatorname{Int}(P)\trianglelefteq Aut(P)$, we have that for each $G$-invariant subgroup $H$ of $P$, $\gamma_2^G(H) = [G, H]$, $\gamma_i^G(P) = \gamma_i(P)$, and $\operatorname{cl}^G(P) = \operatorname{cl}(P)$.
\end{proposition}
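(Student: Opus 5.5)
The plan is to unwind the definitions in the semidirect product $P_1 = G\ltimes P$, where the action of $G$ on $P$ is conjugation in $P_1$ and $[g,h] = (g\cdot h)h^{-1}$. The two cases are handled separately.

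\emph{Case $G=\{e\}$.} Here $[G,H]=\{e\}$ for every subgroup $H$, so Definition \ref{defx} gives directly $\gamma_2^G(H)=H'[G,H]=H'$. Every subgroup is $G$-invariant, so the recursion $\gamma_i^G(H)=\gamma_2^G(\gamma_{i-1}^G(H))$ becomes $\gamma_i^G(H)=(\gamma_{i-1}^G(H))'$. Induction on $i$ then gives $\gamma_i^G(H)=H^{(i-1)}$. Finally $\operatorname{cl}^G(H)=\min\{k \mid H^{(k)}=\{e\}\}=d(H)$.

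\emph{Case $G=\operatorname{Int}(P)$.} Write $\iota_a$ for conjugation by $a\in P$. In $P_1$ we have $[\iota_a,h]=\iota_a(h)h^{-1}=aha^{-1}h^{-1}=[a,h]$. Hence $[G,H]$ is generated by $\{[a,h]\mid a\in P,\ h\in H\}$, i.e.\ $[G,H]=[P,H]$ computed inside $P$. Since $H\subseteq P$, we have $H'\subseteq[P,H]$, so $\gamma_2^G(H)=H'[P,H]=[P,H]$, which is the statement $\gamma_2^G(H)=[G,H]$ once $[G,H]$ is identified with $[P,H]$. Note also that a subgroup of $P$ is $G$-invariant exactly when it is normal in $P$, so the $\gamma_i^G(P)$ below are all normal.

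The lower central series then follows by induction. We have $\gamma_1^G(P)=P=\gamma_1(P)$. If $\gamma_{i-1}^G(P)=\gamma_{i-1}(P)$, then $\gamma_i^G(P)=[P,\gamma_{i-1}(P)]$. This equals $\gamma_i(P)=[\gamma_{i-1}(P),P]$ because $[x,y]^{-1}=[y,x]$, so the two generating sets are mutually inverse. The equality $\operatorname{cl}^G(P)=\operatorname{cl}(P)$ is then immediate from the definitions.

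The only care needed is bookkeeping: the group $\operatorname{Int}(P)$ must be identified with the actual conjugation action (with the paper's convention $a^b=bab^{-1}$), and the commutator in $P_1$ must be checked to reduce to the commutator in $P$. The point is that $[\iota_a,h]$ is computed in the semidirect product, where $\iota_a h \iota_a^{-1}=\iota_a(h)$, so it equals the $P$-commutator $[a,h]$ even though $\iota_a$ and $a$ are different elements of $P_1$. There is no genuine obstacle.
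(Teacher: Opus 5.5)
Your proposal is correct and is the routine unwinding of definitions that the paper intends; the paper omits this proof as straightforward. The key computation is $[\iota_a,h]=[a,h]$ in $P_1$, which gives $[G,H]=[P,H]\supseteq H'$, and both inductions then go through exactly as you describe.
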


\begin{definition}
    For each element $x\in P$, define $b^G(x) := b_{P_1}(x)$. We will call $b^G(x)$ the $G$-breadth of $x\in P$.
\end{definition}

\begin{definition}
    Define $\displaystyle{b^G(P) := \max_{x\in P}b^G(x)}$. We will call $b^G(P)$ the $G$-breadth of $P$.
\end{definition}

The following lemma is well-known, and we omit its proof.

\begin{lemma}\label{lemx}
    Consider any group $A$ and any its subgroups $B, C$ such that $C$ is a normal subgroup of $A$. Then $BC = CB = \{bc \mid b\in B, c\in C\} = \{cb \mid b\in B, c\in C\}$.
\end{lemma}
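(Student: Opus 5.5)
We must show that if $C$ is normal in $A$ and $B$ is any subgroup, then the subgroup $BC=\langle B\cup C\rangle$ (in the paper's notation) equals the product sets $\{bc\}$ and $\{cb\}$. The plan is first to prove the two set equalities $\{bc \mid b\in B, c\in C\} = \{cb \mid b\in B, c\in C\}$, then to show that this common set is a subgroup, and finally to conclude that it coincides with the generated subgroup.

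For the first step, take $b\in B$ and $c\in C$. We have $bc = (bcb^{-1})b$, and normality of $C$ gives $bcb^{-1}\in C$, so $bc$ lies in the set $\{c'b\}$. Symmetrically, $cb = b(b^{-1}cb)$ with $b^{-1}cb\in C$, which gives the reverse inclusion. Call the common set $S$.

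For the second step we check that $S$ is a subgroup. It contains $e=e\cdot e$. For closure under products, take $b_1c_1, b_2c_2\in S$. Write $c_1b_2 = b_2c'$ with $c'\in C$, as in the first step; then $b_1c_1b_2c_2 = (b_1b_2)(c'c_2)\in S$. For inverses, $(bc)^{-1}=c^{-1}b^{-1}$ lies in the set $\{cb\}$, which equals $S$.

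For the last step, note that $S$ contains $B$ (take $c=e$) and $C$ (take $b=e$), so $\langle B\cup C\rangle\subseteq S$. Conversely, every element $bc$ is a product of generators, so $S\subseteq\langle B\cup C\rangle$. Hence $BC=S$. The order-independence $BC=CB$ of the paper's notation is then automatic. There is no real obstacle here; the only point needing care is the closure computation in the second step, which is where normality of $C$ is used.
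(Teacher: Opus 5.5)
Your proof is correct and complete. The identities $bc=(bcb^{-1})b$ and $cb=b(b^{-1}cb)$ show the two product sets are equal, and the subgroup test together with the two inclusions identifies this set with $\langle B\cup C\rangle$. The paper omits the proof as well known, and your argument is the standard one it relies on.
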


The following lemma is straightforward to verify, and its proof is omitted:

\begin{lemma}\label{lemma2}
    For each $G$-invariant subgroup $H$ of $P$, the group action of $G$ on $P$ induces an action of $G$ on $H$, and $GH = G \ltimes H = \{gh \mid g\in G, h\in H\} \subseteq P_1$ is the respective inner semidirect product of $G$ and $H$.
\end{lemma}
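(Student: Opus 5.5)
The plan is to work inside $P_1 = G\ltimes P$ and identify $G$ and $H$ with their canonical images $\{(g,e)\}$ and $\{(e,h)\}$, viewed as subgroups of $P_1$; every claim will then be checked there. First, I will verify that the action of $G$ on $P$ restricts to an action on $H$. By $G$-invariance, each $g\in G$ maps $H$ into $H$. Since $g$ is an automorphism of the finite group $P$ and $g(H)\subseteq H$, injectivity together with finiteness gives $g(H)=H$. Hence $g|_H\in Aut(H)$, and $g\mapsto g|_H$ is a homomorphism $G\to Aut(H)$. This gives the induced action, and therefore the abstract semidirect product $G\ltimes H$.

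Next, I will show that $H$ is normalized by $G$ inside $P_1$. In $P_1$, conjugation by $g$ acts on $P$ as the automorphism $g$, so $gHg^{-1}=g\cdot H=H$ by the previous step. Equivalently, this is the condition $[G,H]\subseteq H$ from the definition of $G$-invariant subgroup. It follows that $H$ is a normal subgroup of the subgroup $GH=\langle G\cup H\rangle$ of $P_1$.

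Then I will apply Lemma \ref{lemx} with $A=GH$, $B=G$ and $C=H$. It gives $GH=\{gh\mid g\in G,\ h\in H\}$. Moreover $G\cap H\subseteq G\cap P=\{e\}$ in $P_1$. So $GH$ is an internal semidirect product of the normal subgroup $H$ and the complement $G$. By the previous step, the conjugation action of $G$ on $H$ in $GH$ is exactly the restricted action. Therefore the map $(g,h)\mapsto gh$ from the abstract $G\ltimes H$ is an isomorphism onto $GH\subseteq P_1$: it is a bijection by uniqueness of the decomposition $gh$, and multiplicativity follows from $g_1h_1g_2h_2=g_1g_2\,(g_2^{-1}h_1g_2)h_2$. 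The same computation shows that this isomorphism is compatible with the inclusion into $P_1$.

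None of these steps is a genuine obstacle. The only point needing care is keeping the paper's conventions consistent: $a^b=bab^{-1}$ and $[a,b]=aba^{-1}b^{-1}$. With these conventions, $g\cdot h = ghg^{-1}$ in $P_1$, and $[g,h]\in H$ is equivalent to $ghg^{-1}\in H$.
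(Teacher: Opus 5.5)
Your proof is correct. The paper omits this proof as straightforward, and your argument is the routine verification it has in mind: $G$-invariance gives $gHg^{-1}=H$ in $P_1$, so $H\trianglelefteq\langle G\cup H\rangle$; Lemma \ref{lemx} (stated immediately before) then gives $GH=\{gh\mid g\in G,\ h\in H\}$, and $G\cap H\subseteq G\cap P=\{e\}$ yields the internal semidirect product with the restricted action.
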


\begin{definition}\label{defgh}
    For each $G$-invariant subgroup $H$ of $P$ and each element $h\in H$, define $b_H^G(h) := b_{GH}(h) \overset{\text{Lem. }\ref{lemma2}}{=} b_{G\ltimes H}(h)$.
\end{definition}

\begin{definition}\label{def6}
    Define
    \begin{equation*}
    \mathcal{M}^G(P) := \min \biggl\{ k \biggm| 
    \begin{aligned} 
        &\text{the set } \{x \in P \mid b^G(x) > k\} \text{ can be covered} \\ 
        &\text{by two proper subgroups of } P \text{ containing } \gamma_2^G(P) 
    \end{aligned} \biggr\}.
\end{equation*}

\textbf{Remark.} These two proper subgroups of $P$ are $G$-invariant from the fact that they contain $[G, P]\subseteq \gamma_2^G(P)$.
\end{definition}

\section{Strengthened Class-Breadth Conjecture for $G$-groups}\label{section3}

The class-breadth conjecture for $p > 2$ -groups can be generalized via the following statement that also generalizes \cite[Theorem 1.2]{10}:

\begin{conjecture}\label{conj1}
    Consider any prime number $p > 2$ and any $p$-group $G$. Then for each $G$-group $P$ there exists a $G$-central series $C_i$, $0\leq i\leq N$ of $P$ such that:\begin{enumerate}
        \item $C_0 = P$, $C_1 = \gamma_2^G(P)$, $C_N = \{e\}$,
        \item $\log_p[C_i : C_{i+1}]\leq \mathcal{M}^G(P) - i + 1$, $\forall 1\leq i\leq N - 1$.
    \end{enumerate}
\end{conjecture}

\begin{proposition}\label{proptwo}
    Conjecture \ref{conj1} implies both the class-breadth conjecture and \cite[Theorem 1.2]{10} for $p>2$ -groups.
\end{proposition}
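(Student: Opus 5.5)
Both consequences follow by specializing $G$. For the class-breadth conjecture, take a $p$-group $Q$ with $p>2$ and set $G=\operatorname{Int}(Q)$, $P=Q$. If $Q$ is abelian, $\operatorname{cl}(Q)\le 1\le b(Q)+1$, so assume $Q$ is nonabelian; then $G$ is a nontrivial $p$-group acting faithfully on $P=Q$. By Proposition \ref{prop1}, $\gamma_i^G(P)=\gamma_i(Q)$ and $\operatorname{cl}^G(P)=\operatorname{cl}(Q)$. Next I compare breadths. In $P_1=G\ltimes P$, the element $x\in P$ is centralized by $g\cdot h$ ($g\in G$, $h\in P$) exactly when $\iota_g(hxh^{-1})=x$, where $\iota_g$ is inner. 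The map $P_1\to Q$, $g h\mapsto \tilde g h$ (with $\tilde g$ a representative of $g$) is defined modulo $Z(Q)$. A cleaner route is to note that $P_1\cong (Q\times Q)/\{(z,z^{-1})\}$-type data, and that the orbit of $x$ under conjugation by $P_1$ equals its $Q$-conjugacy class. Hence $b^G(x)=\log_p|x^{P_1}|=\log_p|x^Q|=b(x)$ and $b^G(P)=b(Q)$.

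Now I bound $\mathcal M^G(P)$. Taking $k=b(Q)$, the set $\{x: b^G(x)>k\}$ is empty. It is therefore covered by any two proper subgroups containing $\gamma_2(Q)$, for instance a maximal subgroup taken twice; such a subgroup exists since $\gamma_2(Q)\ne Q$ for nontrivial $Q$. Thus $\mathcal M^G(P)\le b(Q)$. Applying Conjecture \ref{conj1} gives a $G$-central series with $C_1=\gamma_2(Q)$ and $\log_p[C_i:C_{i+1}]\le b(Q)-i+1$ for $1\le i\le N-1$. Since each $C_i\ne C_{i+1}$ may be assumed after deleting repetitions, each index exponent is at least $1$. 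Deleting repeats only shrinks the indices $i$, which weakens nothing because the bound decreases in $i$. Hence $1\le b(Q)-i+1$, i.e. $N-1\le b(Q)$, so $N\le b(Q)+1$. By Proposition \ref{propositionone}, $\gamma_{N+1}(Q)\subseteq C_N=\{e\}$, giving $\operatorname{cl}(Q)\le N\le b(Q)+1$.

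The main care point is this re-indexing. Removing duplicates in a $G$-central series preserves both the $G$-central property and $C_1=\gamma_2^G(P)$, provided $C_1\ne C_0$. The latter holds because $[G,P]P'\ne P$ for a $p$-group acting on a $p$-group, since $G$ acts unipotently on $P/\Phi(P)$.

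For \cite[Theorem 1.2]{10}, I expect the statement to concern derived length versus a breadth-type invariant. I would take $G=\{e\}$, so that by Proposition \ref{prop1} we have $\gamma_i^G=P^{(i-1)}$ and $b^G=b$. The same counting as above yields $d(P)\le N\le \mathcal M^{\{e\}}(P)+1$, with the two-subgroup covering condition matching the hypothesis of that theorem. The main obstacle is identifying exactly which hypothesis/bound of \cite{10} corresponds to $\mathcal M$. I would first try to match its statement with the covering condition in Definition \ref{def6}, where the subgroups contain $P'$, verbatim.
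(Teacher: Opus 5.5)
Your argument for the class-breadth conjecture is correct and follows the paper's route: specialize to $G=\operatorname{Int}(P)$, apply Proposition~\ref{prop1}, and bound $N$ through the index condition. You also supply details the paper leaves implicit, namely that $b^G=b$ because the $P_1$-orbit of $x$ is its $P$-conjugacy class, and hence $\mathcal{M}^G(P)\le b(P)$. Deleting all repetitions is more than you need. The paper only drops trailing trivial terms so that $C_{N-1}\neq\{e\}$, and then uses the single inequality $1\le\log_p[C_{N-1}:C_N]\le\mathcal{M}^G(P)-N+2$; the condition $C_1\neq C_0$ plays no role there.

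The genuine gap is the second claim. You guessed that \cite[Theorem 1.2]{10} concerns derived length and proposed $d(P)\le\mathcal{M}^{\{e\}}(P)+1$. As the paper's argument shows, that theorem is instead the order bound $\log_p|P'|\le\mathcal{M}(P)(\mathcal{M}(P)+1)/2$. Your derived-length inequality is a different statement, and it already follows from the class bound since $d(P)\le\operatorname{cl}(P)$. So nothing in your proposal establishes the cited theorem.

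The missing idea is to use Property~2 of Conjecture~\ref{conj1} at every level, not just the last one, together with $C_1=\gamma_2^G(P)$ and $C_N=\{e\}$. Telescoping gives
\[
\log_p|\gamma_2^G(P)|=\sum_{i=1}^{N-1}\log_p[C_i:C_{i+1}]\le\sum_{i=1}^{N-1}\max\bigl(0,\mathcal{M}^G(P)-i+1\bigr)\le\frac{\mathcal{M}^G(P)\bigl(\mathcal{M}^G(P)+1\bigr)}{2}.
\]
Then specialize to $G=\operatorname{Int}(P)$, or equally $G=\{e\}$. Both give $\gamma_2^G(P)=P'$ and $b^G=b$, so $\mathcal{M}^G(P)$ becomes the two-subgroup covering invariant of $P$ itself, with covering subgroups containing $P'$. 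This is exactly where the normalization $C_1=\gamma_2^G(P)$ in the conjecture is needed; the class bound never uses it.
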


\begin{proof}
    Without loss of generality, we may assume that $C_{N-1}\not=\{e\}$. From Properties 1, 2, it follows that $1\leq\log_p|C_{N-1}| = \log_p|C_{N-1}/C_N| \leq \mathcal{M}^G(P) - N + 2$, therefore, $N\leq \mathcal{M}^G(P) + 1$ and, from Proposition \ref{propositionone}, $\gamma_{\mathcal{M}^G(P) + 2}^G(P)\subseteq\gamma_{N+1}^G(P)\subseteq C_N = \{e\}$, i.e. $\operatorname{cl}^G(P)\leq \mathcal{M}^G(P) + 1$. In the case $G = Int(P)$, from Proposition \ref{prop1} we obtain the class-breadth conjecture for $p>2$ -groups. To prove \cite[Theorem 1.2]{10}, notice that Properties 1, 2 imply $\log_p|\gamma_2^G(P)| = \log_p|C_1| = \sum_{i=1}^{N-1}\log_p[C_i : C_{i+1}]\leq \sum_{i=1}^{N-1}\max (0, \mathcal{M}^G(P) - i + 1) \leq \mathcal{M}^G(P)(\mathcal{M}^G(P)+1)/2$. In the case $G = Int(P)$, from Proposition \ref{prop1} we obtain \cite[Theorem 1.2]{10} for $p>2$ -groups.
\end{proof}

In this paper, we prove the following special case of Conjecture \ref{conj1}:

\begin{theorem}\label{th1}
    Consider any $p>2$ -group $G$ and any $G$-group $P$, let $P_1 := G\ltimes P$ denotes the respective semidirect product. Assume that there exists a normal and $G$-invariant subgroup $M$ of $P$ satisfying:
    \begin{enumerate}
        \item $\log_p[P : M] \leq 1$,
        \item $[G, M] = \{e\}$.
    \end{enumerate}
    
    Then $G$-group $P$ has a $G$-central series $C_i$, $0\leq i\leq N$ as in Conjecture \ref{conj1}.
\end{theorem}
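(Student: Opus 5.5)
The plan is to argue by induction on $|P|$, and to build the series $C_i$ from commutator maps with a few well-chosen elements of small $G$-breadth. Set $m:=\mathcal{M}^G(P)$ and let $K_1,K_2$ be the two proper subgroups containing $\gamma_2^G(P)$ that cover $\{x\in P\mid b^G(x)>m\}$.

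\textbf{Step 1: the degenerate case $M=P$.} Then $[G,P]=\{e\}$, so $\gamma_2^G(P)=P'$, and any descending chain of subgroups is automatically $G$-central. Moreover $G$ centralizes every $x\in P$, so $Z_{P_1}(x)=G\,Z_P(x)$ and $b^G(x)=b_P(x)$. The statement therefore reduces to a purely group-theoretic one: $P'$ admits a chain $P'=C_1\supseteq C_2\supseteq\dots\supseteq C_N=\{e\}$ with $\log_p[C_i:C_{i+1}]\le m-i+1$, where $m$ is the "two-subgroup covering" breadth. I would prove this along the lines of the Leedham-Green--Neumann--Wiegold argument.
\begin{enumerate}
\item Choose $a\notin K_1\cup K_2$. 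If $K_1\ne K_2$, also choose $b\in K_1\setminus K_2$ and $c\in K_2\setminus K_1$, so that $b^G(a),b^G(bc)\le m$ and the elements generate $P$ modulo $\Phi(P)\supseteq P'$.
\item Use the maps $y\mapsto[y,a]$. Such a map is injective modulo $Z_P(a)$, which makes $[P,a]$ have at most $p^{m}$ elements.
\item Filter by $C_i:=\gamma_{i+1}(P)$, refined where needed by the centralizer filtration of $a$. The factor $C_i/C_{i+1}$ is an image of $C_{i-1}/C_i\otimes P/\gamma_2$ that is controlled by commutation with $a$ and with $b,c$. A counting argument, losing one dimension per step, should then give $\log_p[C_i:C_{i+1}]\le m-i+1$.
\end{enumerate}

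\textbf{Step 2: the case $[P:M]=p$.} Write $P=\langle x\rangle M$ with $x\notin M$. Since $G$ centralizes $M$, we have $P_1=\langle G,x\rangle M$, and for $y\in M$ the $P_1$-breadth is governed only by $x$ and $M$. Also $[G,P]=\langle\!\langle[G,x]\rangle\!\rangle^{P}$, and $[G,x]\subseteq xMx^{-1}\cdots$, so in fact $[G,x]\subseteq M$ because $P/M$ is abelian of order $p$ and $G$ is a $p$-group acting on it trivially. The plan is to treat $\widetilde P:=\langle\!\langle G\rangle\!\rangle P_1$-style structure via the auxiliary group $P_1$ itself:
\begin{enumerate}
\item Consider $Q:=M\rtimes\langle G,x\rangle$, noting that $\gamma_2^G(P)=[P_1,P]$ modulo details.
\item Pick a central subgroup $Z\le\gamma_2^G(P)$ of order $p$ in $P_1$ and pass to $P/Z$ (still a $G$-group with $M/Z$ satisfying the hypotheses). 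Check that $\mathcal{M}^G(P/Z)\le m$, since breadths do not increase in quotients and the covering subgroups descend.
\item By induction obtain a series for $P/Z$, pull it back, and insert $Z$.
\end{enumerate}
The insertion in item 3 is the delicate point. To keep the bound $m-i+1$ one must place $Z$ at a level $i$ where the lifted factor still has room. Equivalently, if the factor sizes for $P/Z$ were all tight, one must show that $\mathcal{M}^G(P/Z)\le m-1$. I would obtain that via the breadth-drop lemma: if $Z\subseteq[a,P_1]$ for the chosen non-covered $a$, then $b^G_{P/Z}(aZ)\le b^G(a)-1$. This requires choosing $Z$ inside $[a,P_1]\cap Z(P_1)$, which is possible when $[a,P_1]$ meets the centre, for instance because $[a,P_1]$ is normal in $P_1$ whenever $a$ can be taken central modulo $\gamma_2^G(P)$.

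\textbf{Main obstacle.} The hardest step is this last one: guaranteeing that a single choice of $Z$ simultaneously lowers the breadth of every element outside the two covering subgroups, so that $\mathcal{M}^G$ actually drops. I would first try to get this from $[G,M]=\{e\}$ together with $p>2$. Using the Hall--Petresco identity (valid because $p$ is odd) to control $[a^k m,P_1]$, one would aim to show that a single $Z\subseteq\bigcap[a',P_1]$, taken over representatives $a'$ of the non-covered cosets, exists. Failing that, I would case-split on whether $K_1=K_2$ and handle the two-generator situation directly with the explicit commutator filtration of Step 1.
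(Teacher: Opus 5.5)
There is a genuine gap. The step everything rests on is left open, and your set-up makes it hard to close.

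In Step 2 you induct through a central quotient $P/Z$ with $Z\le\gamma_2^G(P)$ of order $p$. Pulling back places $Z$ at the bottom of the series, so, as you note, you need $\mathcal{M}^G(P/Z)\le m-1$ whenever the inherited series is tight. The breadth of $aZ$ in $P_1/Z$ equals $b^G(a)-1$ if $Z\subseteq[a,P_1]$, and equals $b^G(a)$ otherwise. So this demands a single central $Z$ lying in the commutator set $[a,P_1]$ of every $a\notin K_1\cup K_2$ of breadth $m$. Nothing in the hypotheses yields such a common $Z$, and you give no argument for it; the Hall--Petresco remark does not address it. Your auxiliary claim also fails. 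Every $a\in P$ has $[a,P_1]\subseteq\gamma_2^G(P)$, so your condition is vacuous, and yet $[a,P_1]$ (a translate of the conjugacy class of $a^{-1}$) is in general not even a subgroup.

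Step 1 is misdirected as well. The case $M=P$ needs no separate treatment: then $[G,P]=\{e\}$, and any maximal subgroup containing $\gamma_2^G(P)$ can serve as $M$. Moreover, the bound you aim for there, $\log_p[\gamma_{i+1}(P):\gamma_{i+2}(P)]\le m-i+1$, would give $\operatorname{cl}(P)\le b(P)+1$ for every $p$-group with $p>2$ (take $G=\{e\}$). That is the open conjecture itself, so no routine ``counting argument'' will deliver it.

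The missing idea is to induct on a maximal \emph{subgroup}, where the loss of breadth is automatic and the extra layer goes on \emph{top}.

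When $P/\gamma_2^G(P)$ is cyclic, the paper argues directly. Take $a\notin Q\cup M$. The hypothesis $[G,M]=\{e\}$ gives $[\langle\!\langle G\rangle\!\rangle^{P_1},a]=[\langle\!\langle G\rangle\!\rangle^{P_1},P]\supseteq[G,P]=\gamma_2^G(P)$. Hence $\log_p|\gamma_2^G(P)|\le b^G(a)\le m$, and any chief series of $P_1$ from $\gamma_2^G(P)$ down to $\{e\}$ works.

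Otherwise, the paper takes covering maximal subgroups $C_1\ne C_2$. Using $p>2$ (there are $p+1\ge 4$ maximal subgroups through $C_1\cap C_2$), it picks a maximal $H\ne M$ with $H\cap C_1=H\cap C_2=C_1\cap C_2$, and applies induction to $H$ with $M\cap H$.
\begin{itemize}
\item If $(G\ltimes H)'=P_1'$, then $\gamma_2^G(H)=\gamma_2^G(P)$ and $\mathcal{M}^G(H)\le m$, so the series of $H$ serves for $P$.
\item If $(G\ltimes H)'\subsetneq P_1'$, then $X=\{x\in G\ltimes H\mid [x,P_1]\subseteq(G\ltimes H)'\}$ is a proper subgroup. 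It contains $G\gamma_2^G(H)$, because $H\ne M$ and $[G,M]=\{e\}$ force $[G,P]=[G,H]$. Every $h\in H\setminus X$ satisfies $b_{G\ltimes H}(h)\le b^G(h)-1$, so $X\cap H$ and $C_1\cap C_2$ witness $\mathcal{M}^G(H)\le m-1$. Meanwhile $\gamma_2^G(P)=\gamma_2^G(H)\circ[a,P_1]$ for $a\notin H\cup C_1\cup C_2$ gives $\log_p[\gamma_2^G(P):\gamma_2^G(H)]\le m$.
\end{itemize}
The drop of $\mathcal{M}^G$ by one pays exactly for the one new top factor. Your quotient induction cannot do this bookkeeping without the unproved common-$Z$ property.
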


\begin{remark}
    When $M = P$ and $P\not= \{e\}$ (the statement is trivially true when $P = \{e\}$) in Theorem \ref{th1}, we can replace $M$ with any maximal subgroup of $P$ that contains $\gamma_2^G(P)$ (for example, $M = M_0\cap P$, where $M_0$ is any maximal subgroup of $G\ltimes P$), preserving Conditions 1 and 2 of Theorem \ref{th1} ($M$ will be still $G$-invariant since $[G, M]\subseteq [G, P]\subseteq \gamma_2^G(P)\subseteq M$). Therefore, without loss of generality, we will assume that $\log_p[P : M] = 1$ in Theorem \ref{th1}.
\end{remark}

\begin{remark}
    Theorem \ref{th1} in the case when $M = P$ implies \cite[Theorem 1.2]{10} by a similar argument to that in Proposition \ref{proptwo}.
\end{remark}

\section{Proof of Theorem \ref{th1}}\label{section4}

\begin{lemma}\label{lem3}\label{lemmaABC}
    Consider any group $X$ and any its subgroups $A, B, C, D$ such that either $B\subseteq N_X(C)$ or $C\subseteq N_X(B)$ holds, and $D\trianglelefteq X$. Consider also any subset $S$ of $X$. Then 
    \begin{enumerate}[label=(\alph*)]
        \item $[A, BC]\subseteq [A, B]\llbracket A, C\rrbracket^B$,
        \item $[A, \langle\!\langle S\rangle\!\rangle^X]\subseteq \langle\!\langle \{[\langle\!\langle A\rangle\!\rangle^X, s]\ \vert\ s\in S\}\rangle\!\rangle^X$.
    \end{enumerate}
\end{lemma}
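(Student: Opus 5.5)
Both parts reduce to commutator identities together with the fact that normal closures are normal. The paper's convention is $[a,b]=aba^{-1}b^{-1}$, so the relevant expansion is
\[
[a,xy]=a\,xy\,a^{-1}y^{-1}x^{-1}=[a,x]\cdot x[a,y]x^{-1}=[a,x]\,[a,y]^{x}.
\]

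For part (a), I first observe that under either hypothesis ($B\subseteq N_X(C)$ or $C\subseteq N_X(B)$) Lemma \ref{lemx} gives $BC=\{bc\mid b\in B,\ c\in C\}$. In the first case $C$ is normal in $\langle B,C\rangle$; in the second $B$ is. So each element of $BC$ can be written as $bc$ with $b\in B$, $c\in C$. In the second case one may also use the form $cb$, but it is more uniform to always write $bc$, which Lemma \ref{lemx} allows in both cases.

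Fix $a\in A$. The identity above gives
\[
[a,bc]=[a,b]\,[a,c]^{b}.
\]
Here $[a,b]\in[A,B]$, and $[a,c]^b$ is a $B$-conjugate of a generator of $\llbracket A,C\rrbracket^B$, so it lies in that group. Hence each generator $[a,bc]$ of $[A,BC]$ lies in the subgroup generated by $[A,B]$ and $\llbracket A,C\rrbracket^B$. With the paper's product convention this subgroup is exactly $[A,B]\llbracket A,C\rrbracket^B$, which gives (a). No closure argument for products is needed, since the right-hand side is by definition the subgroup generated by both pieces. The only place where care is needed is the bookkeeping of which factor gets conjugated.

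For part (b), set $N:=\langle\!\langle\{[\langle\!\langle A\rangle\!\rangle^X,s]\mid s\in S\}\rangle\!\rangle^X$; this is normal in $X$, since $X$ contains the generating set. The group $\langle\!\langle S\rangle\!\rangle^X$ is generated by the conjugates $s^{x}$ with $s\in S$, $x\in X$, and their inverses. Repeatedly applying $[a,yz]=[a,y][a,z]^y$ together with the normality of $N$ shows that $\{y\mid [a,y]\in N\}$ is closed under products. Similarly, $[a,y^{-1}]=([a,y]^{y^{-1}})^{-1}$ shows it is closed under inverses. It therefore suffices to prove $[a,s^{x}]\in N$ for all $a\in A$, $s\in S$, $x\in X$.

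For this, conjugate by $x^{-1}$:
\[
[a,s^x]=x\,[a^{x^{-1}},s]\,x^{-1}.
\]
Since $a^{x^{-1}}\in\langle\!\langle A\rangle\!\rangle^X$, the commutator $[a^{x^{-1}},s]$ is one of the generators of $N$. Its $X$-conjugate therefore lies in $N$, which proves (b).

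I expect the main obstacle to be purely notational. One has to keep the conventions $[a,b]=aba^{-1}b^{-1}$ and $a^b=bab^{-1}$ straight, so that the conjugation exponents come out on the correct side. In (a) the conjugating element must come from $B$ and not from $C$, which is exactly why the decomposition order $bc$ matters.
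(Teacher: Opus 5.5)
Your proposal is correct and takes essentially the same route as the paper. Part (a) uses Lemma \ref{lemx} together with $[a,bc]=[a,b][a,c]^b$, and part (b) uses the base case $[a,s^x]=[a^{x^{-1}},s]^x$ extended along products. Showing that $\{y \mid [a,y]\in N\}$ is a subgroup repackages the paper's induction on word length, and is slightly more careful because it also covers the inverse letters $(s^x)^{-1}$, which matters when $X$ is an arbitrary group.
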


\begin{proof}
    Part (a) follows from Lemma \ref{lemx}, the fact that either $B\subseteq N_X(C)$ or $C\subseteq N_X(B)$, and the fact that $[a, bc] = [a, b][a, c]^b$, $\forall a, b, c\in A$.

    Part (b) follows from $[a, bc] = [a, b][a, c]^b$, $\forall a, b, c\in A$ and the induction on the length $k$ of a word $s = s_1^{x_1}\ldots s_k^{x_k}\in \langle\!\langle S\rangle\!\rangle^X$ (the base case $k=1$ follows from $[a, s^x] = [a^{x^{-1}}, s]^x\in RHS$, $\forall a\in A, \forall s\in S, \forall x\in X$).
\end{proof}

\begin{lemma}\label{lemmaXAB}
    Consider any group $X$ and any its subgroups $A, B$, such that $B\trianglelefteq X$. Then $[\langle\!\langle A\rangle\!\rangle^X, B] = \llbracket A, B\rrbracket^X$.
\end{lemma}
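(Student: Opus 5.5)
We must prove $[\langle\!\langle A\rangle\!\rangle^X, B] = \llbracket A, B\rrbracket^X$ for subgroups $A, B$ of $X$ with $B\trianglelefteq X$. The plan is a double inclusion, using only the commutator identities $[a, bc] = [a, b][a, c]^b$, $[ab, c] = [b,c]^a[a,c]$ (with the paper's convention $[a,b]=aba^{-1}b^{-1}$), and $[x,y]^z = [x^z, y^z]$.

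\emph{Inclusion $\supseteq$.} Every generator $[a,b]$ of $\llbracket A, B\rrbracket^X$ with $a\in A$, $b\in B$ lies in $[\langle\!\langle A\rangle\!\rangle^X, B]$, since $A\subseteq \langle\!\langle A\rangle\!\rangle^X$. It therefore suffices to show that $[\langle\!\langle A\rangle\!\rangle^X, B]$ is normal in $X$. Both $\langle\!\langle A\rangle\!\rangle^X$ and $B$ are normal in $X$. For any $x\in X$, $u\in \langle\!\langle A\rangle\!\rangle^X$ and $b\in B$ we have $[u,b]^x = [u^x, b^x]$ with $u^x\in\langle\!\langle A\rangle\!\rangle^X$ and $b^x\in B$. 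Hence conjugation permutes the generating set, and the generated subgroup is normal. The normal closure $\llbracket A,B\rrbracket^X$ is the smallest normal subgroup containing these generators, so it is contained in $[\langle\!\langle A\rangle\!\rangle^X, B]$.

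\emph{Inclusion $\subseteq$.} Write $N := \llbracket A, B\rrbracket^X$, which is normal in $X$. We must show $[u,b]\in N$ for every $u\in\langle\!\langle A\rangle\!\rangle^X$ and $b\in B$.
\begin{enumerate}
    \item \textbf{Conjugates of $A$.} For a conjugate $u = a^x$ we have $[a^x, b] = [a, b^{x^{-1}}]^x$. Since $B$ is normal, $b^{x^{-1}}\in B$, so $[a, b^{x^{-1}}]$ is a generator of $N$, and $N$ is normal, so $[a^x,b]\in N$. This is the one place where $B\trianglelefteq X$ is essential.
    \item \textbf{Inverses.} $[a^{-1}, b] = ([a,b]^{a^{-1}})^{-1}\in N$, and likewise for conjugates $(a^x)^{-1} = (a^{-1})^x$, which already lie in $A^x$.
    \item \textbf{Products.} Induct on the length $k$ of a word $u = u_1\cdots u_k$ with each $u_i\in A^{x_i}$. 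The identity $[u'u_k, b] = [u_k, b]^{u'}[u', b]$ reduces length $k$ to length $k-1$, and each factor lies in $N$ by the inductive hypothesis, the base case, and the normality of $N$.
\end{enumerate}
This shows $[u,b]\in N$ for all such $u, b$. Since $N$ is a subgroup, the subgroup generated by all $[u,b]$, namely $[\langle\!\langle A\rangle\!\rangle^X, B]$, is contained in $N$.

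The argument mirrors the proof of Lemma \ref{lemmaABC}(b), and nothing beyond routine bookkeeping is expected. The only point requiring care is the bookkeeping of which conjugates and inverses appear under the paper's commutator convention.
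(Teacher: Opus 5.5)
Your proof is correct and is essentially the paper's argument: the inclusion $\supseteq$ via normality of $[\langle\!\langle A\rangle\!\rangle^X, B]$ is identical. Your word-length induction for $\subseteq$ is the argument behind Lemma \ref{lem3}(b), which the paper invokes (after swapping the commutator arguments and using $\langle\!\langle B\rangle\!\rangle^X = B$) rather than redoing it inline; your explicit treatment of inverses is, if anything, slightly more careful than the paper's.
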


\begin{proof}
    From $B, \langle\!\langle A\rangle\!\rangle^X\trianglelefteq X$, we obtain $[\langle\!\langle A\rangle\!\rangle^X, B]\trianglelefteq X$ and, therefore, from $[A, B]\subseteq [\langle\!\langle A\rangle\!\rangle^X, B]$, that $\llbracket A, B\rrbracket^X\subseteq [\langle\!\langle A\rangle\!\rangle^X, B]$. Inclusion $[\langle\!\langle A\rangle\!\rangle^X, B] \subseteq \llbracket A, B\rrbracket^X$ follows from Lemma \ref{lem3}(b).
\end{proof}

\begin{lemma}\label{lem2}
    For each $p$-group $G$ and any its maximal subgroup $H$ such that $H' \subsetneq G'$, the set $X := \{x\in H \mid [x, G]\subseteq H'\}$ is a proper subgroup of $H$.
\end{lemma}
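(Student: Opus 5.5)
The plan has two parts: first show that $X$ is a subgroup of $H$ by a direct commutator computation, then show that $X \neq H$ by contradiction, using the standard fact that a group which is central-by-cyclic is abelian.

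\textbf{Normality of $H'$.} Since $H$ is a maximal subgroup of $G$, it is normal in $G$. The derived subgroup $H'$ is characteristic in $H$, so $H' \trianglelefteq G$. This normality is what makes every manipulation below legitimate.

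\textbf{$X$ is a subgroup.} Clearly $e \in X$. With the paper's convention $[a,b] = aba^{-1}b^{-1}$, we have for $x, y \in H$ and $g \in G$
\[
[xy, g] = x[y,g]x^{-1}\,[x,g].
\]
If $x, y \in X$, both $[x,g]$ and $[y,g]$ lie in $H'$. Since $H' \trianglelefteq G$, the conjugate $x[y,g]x^{-1}$ also lies in $H'$, so $[xy,g] \in H'$. For inverses, expanding $e = [x^{-1}x, g]$ by the same identity gives
\[
[x^{-1}, g] = x^{-1}[x,g]^{-1}x \in H'.
\]
Hence $X$ is a subgroup of $H$.

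\textbf{$X$ is proper.} Suppose instead that $X = H$, i.e. $[H, G] \subseteq H'$. Passing to $\overline{G} := G/H'$, this says $\overline{H} := H/H'$ is contained in $Z(\overline{G})$. Because $H$ is maximal of index $p$, choosing any $t \in G \setminus H$ gives $G = H\langle t \rangle$, so $\overline{G} = \overline{H}\langle \overline{t} \rangle$. A central subgroup together with a single element generates an abelian group, so $\overline{G}$ is abelian. Therefore $G' \subseteq H'$. Combined with the obvious inclusion $H' \subseteq G'$, this gives $G' = H'$, contradicting the hypothesis $H' \subsetneq G'$.

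\textbf{Main obstacle.} Nothing here is genuinely difficult. The only points needing care are the sign convention in the commutator identity and checking that $H' \trianglelefteq G$, which is needed both for the closure argument and for forming the quotient $G/H'$.
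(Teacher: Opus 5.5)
Your proof is correct and essentially the paper's argument: the properness step is the same (if $X=H$, then $H/H'$ is central of index $p$ in $G/H'$, so $G/H'$ is abelian and $G'=H'$). The only difference is cosmetic: the paper obtains the subgroup property by writing $X=\pi^{-1}(Z(G/H'))\cap H$ for $\pi\colon G\to G/H'$, while you verify closure directly with the identity $[xy,g]=x[y,g]x^{-1}[x,g]$, which is correct under the paper's convention $[a,b]=aba^{-1}b^{-1}$.
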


\begin{proof}
    Note that from the maximality of $H$ in $G$, it follows that $H\trianglelefteq G$ and $[G : H] = p$, which implies $H' \trianglelefteq G$. Consider the natural homomorphism of factorization $\pi : G \to G/H'$. It is straightforward to check that $X = \pi^{-1}(Z(\pi(G))) \cap H$, which is clearly a subgroup of $H$. If $X = H$, then $\pi(X) = \pi(H)$ is a central subgroup of index $p$ in $\pi(G)$. This implies that $\pi(G)$ is abelian, and, consequently, $G' = H'$, which is a contradiction.
\end{proof}

\begin{lemma}\label{lemmath1}
    If the conditions of Theorem \ref{th1} are satisfied, then the following hold:\begin{enumerate}[label=(\alph*)]
    \item $M\trianglelefteq P_1$,
    \item $\llbracket G, M\rrbracket^{P_1} = \{e\}$,
    \item $[\langle\!\langle G\rangle\!\rangle^{P_1}, M] = \{e\}$,
    \item $[P_1, P]\subseteq M$,
    \item $[G, P_1]\subseteq G'[G, P]$.
    \end{enumerate}
\end{lemma}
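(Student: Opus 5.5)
We prove the five items in order, working inside $P_1 = G\ltimes P$, where $P\trianglelefteq P_1$ and $P_1 = GP$ (Lemma \ref{lemma2}).

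For (a), note that $M$ is normal in $P$ and $G$-invariant, so $M$ is normalized by both $P$ and $G$. Since $P_1 = GP$, $M$ is normalized by the whole of $P_1$, i.e. $M\trianglelefteq P_1$.

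For (b), Condition 2 gives $[G,M]=\{e\}$. By definition $\llbracket G,M\rrbracket^{P_1}$ is the normal closure in $P_1$ of the set of commutators $[g,m]$ with $g\in G$, $m\in M$. Every such commutator is $e$, so the normal closure is trivial.

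For (c), we apply Lemma \ref{lemmaXAB} with $X=P_1$, $A=G$ and $B=M$; this is allowed because $M\trianglelefteq P_1$ by (a). It gives $[\langle\!\langle G\rangle\!\rangle^{P_1},M]=\llbracket G,M\rrbracket^{P_1}$, which is $\{e\}$ by (b).

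For (d), we use the Remark after Theorem \ref{th1} to assume $[P:M]=p$; if $M=P$ the claim is trivial. Then $P/M$ is cyclic of order $p$. We also need that $G$ acts trivially on $P/M$, i.e. $[G,P]\subseteq M$. This is the only step needing a genuine argument: by (a), $G$ acts on $P/M\cong \mathbb{Z}/p$ through $P_1/M$; $G$ is a $p$-group and $\operatorname{Aut}(\mathbb{Z}/p)$ has order $p-1$, coprime to $p$, so the action is trivial. Hence $[G,P]\subseteq M$.

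Also $[P,P]\subseteq M$, since $P/M$ is abelian. Now write an element of $P_1$ as $gx$ with $g\in G$, $x\in P$, and use $[gx,y]=[x,y]^g[g,y]$. Both factors lie in $M$, because $M$ is $G$-invariant. So $[P_1,P]\subseteq M$.

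For (e), we expand $[g,hx]$ for $g,h\in G$ and $x\in P$ using $[a,bc]=[a,b][a,c]^b$. This gives $[g,h][g,x]^h$. The first factor lies in $G'$ and the second in $[G,P]^h$.

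To conclude, it suffices to check that $G'[G,P]$ is closed under conjugation by $G$, because $[G,P_1]$ is generated by such commutators. This holds: $G$ normalizes $G'$, and $[G,P]$ is normalized by $G$, since $[g,x]^h=[g^h,x^h]$.

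The only point that is not pure bookkeeping is the triviality of the $G$-action on $P/M$ in (d), which needs the index-$p$ reduction together with the fact that $p\nmid p-1$.
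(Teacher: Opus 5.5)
Your proposal is correct. Parts (a), (b), (c) coincide with the paper's argument. Part (e) is also the same: the paper packages your identity $[g,hx]=[g,h][g,x]^h$ as Lemma~\ref{lem3}(a), giving $[G,GP]\subseteq G'\llbracket G,P\rrbracket^G$. It then uses $G$-invariance of $[G,P]$ exactly as you do.

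The genuine difference is in (d).

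\textbf{The paper's route.} It argues globally. Since $M$ is $G$-invariant, $GM=G\ltimes M$ has index $[P:M]\le p$ in $P_1$. A subgroup of index at most $p$ in a $p$-group is normal with abelian quotient, so $P_1'\subseteq GM$. Hence
$[P_1,P]\subseteq P_1'\cap P\subseteq (G\ltimes M)\cap P=M$.
This treats $M=P$ and $[P:M]=p$ uniformly and needs no commutator expansion. Combined with Lemma~\ref{lem9}(d), it also gives $\gamma_2^G(P)=P_1'\cap P\subseteq M$ in one stroke.

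\textbf{Your route.} You work element-wise. You split off $M=P$ and get $[G,P]\subseteq M$ because a $p$-group must act trivially on $P/M\cong\mathbb{Z}/p$, as $|\operatorname{Aut}(\mathbb{Z}/p)|=p-1$. You add $P'\subseteq M$ and assemble with $[gx,y]=[x,y]^g[g,y]$.

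\textbf{Comparison.} Both arguments are elementary. The paper's is shorter and uniform in the index. Yours isolates the two ingredients $[G,P]\subseteq M$ and $P'\subseteq M$ (together $\gamma_2^G(P)\subseteq M$), at the cost of a few more lines.

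Your appeal to the Remark after Theorem~\ref{th1} is superfluous. The dichotomy $M=P$ or $[P:M]=p$ comes directly from Condition~1, and you handle $M=P$ directly anyway.
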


\begin{proof}
Proof of (a). It follows from:\begin{enumerate}
\item the fact that $P_1 = GP$,
    \item the fact that the normalizer $N_{P_1}(M)$ of $M$ in $P_1$ contains $P$, because $M\trianglelefteq P$, and
    \item the fact that the normalizer $N_{P_1}(M)$ of $M$ in $P_1$ contain $G$, because $M$ is $G$-invariant.
\end{enumerate}

Proof of (b). It follows from the fact that $[G, M] = \{e\}$.

Proof of (c). It follows from Part (b) and Lemma \ref{lemmaXAB}.

Proof of (d). From the facts that $M$ is a $G$-invariant subgroup of $P$, $[P : M]\leq p$, and Lemma \ref{lemma2}, we obtain $GM = G\ltimes M$ and $[P_1 : GM] = [P : M]\leq p$, which implies $P_1'\subseteq GM = G\ltimes M$. Therefore, from $P_1'\subseteq G\ltimes M$ and $P\trianglelefteq P_1$, it follows that $[P_1, P]\subseteq P$ and \begin{equation*}
[P_1, P] = \underbrace{[P_1, P]}_{\subseteq P_1'\subseteq G\ltimes M}\cap P\subseteq (G\ltimes M)\cap P = M.
\end{equation*}

Proof of (e). It follows from Lemma \ref{lem3}(a), the fact that $P_1 = GP$, and the fact that $[G, P]$ is a $G$-invariant subgroup of $P$.
\end{proof}

\begin{lemma}\label{lem9}\label{lemy}\label{lem10}\label{lem13}\label{lemmaAB}
    Consider any $p$-group $G$ and any $G$-group $P$, let $P_1 := G \ltimes P$ denotes the respective semidirect product. Consider any $G$-invariant subgroup $H$ of $P$. Denote $H_1 := GH \overset{\text{Lem. }\ref{lemma2}}{=}G\ltimes H$. Then:\begin{enumerate}[label=(\alph*)]
    \item $[G, H]\trianglelefteq H_1$,
        \item $\gamma_2^G(H)$ is a $G$-invariant subgroup of $P$,
        \item $H_1' = G'\gamma_2^G(H) = G'\ltimes\gamma_2^G(H)$,
        \item $H_1' \cap P = \gamma_2^G(H) \subseteq H$.
    \end{enumerate}
\end{lemma}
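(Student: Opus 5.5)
We work throughout inside $P_1 = G\ltimes P$, where the action of $G$ on $P$ is conjugation. In this setting $[G,H]$ is generated by the commutators $[g,h]=g\cdot h\,h^{-1}$. Since $H$ is $G$-invariant, each such commutator lies in $H$, so $[G,H]\subseteq H$.

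For (a), we show that $[G,H]$ is normalized by both $G$ and $H$; this suffices because $H_1 = GH$. Both $G$ and $H$ normalize $\langle G\cup H\rangle = H_1$, so the standard fact applies: for subgroups $A,B$, the subgroup $[A,B]$ is normalized by $A$ and by $B$. Concretely, we use the identities $[a,b]^{x}=[a^x,b^x]$ together with $[a,bc]=[a,b][a,c]^b$ and $[ac,b]=[c,b]^a[a,b]$ (with the paper's conventions). Conjugating a generator $[g,h]$ by $g'\in G$ gives
\[
[g,h]^{g'}=[g'g,h]\,[g',h]^{-1},
\]
which lies in $[G,H]$. Conjugation by $h'\in H$ is handled symmetrically with the other identity.

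For (b), we first note that $H'\subseteq H$ and $[G,H]\subseteq H$, so $\gamma_2^G(H)=H'[G,H]$ is a subgroup of $H\subseteq P$. For $G$-invariance it suffices to show that $G$ normalizes both $H'$ and $[G,H]$. The second holds by (a). For the first, $g\cdot[h,k]=[g\cdot h,g\cdot k]\in H'$, since $H$ is $G$-invariant.

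For (c), we expand commutators of elements $g_1h_1, g_2h_2\in H_1$ using Lemma~\ref{lem3}(a) or the commutator identities. Every resulting term is a conjugate of an element of $G'$, of $[G,H]$, or of $H'$. Hence
\[
H_1' = \langle\!\langle G'\cup [G,H]\cup H'\rangle\!\rangle^{H_1}.
\]
The reverse inclusion $G'\gamma_2^G(H)\subseteq H_1'$ is clear, since all three pieces are commutators in $H_1$. It therefore remains to show that $N:=\gamma_2^G(H)$ is normal in $H_1$ and that $G'N$ is normal in $H_1$.

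By (b), $N$ is normalized by $G$. It is also normalized by $H$, because $H'\trianglelefteq H$ and $[G,H]$ is normalized by $H$ by (a). So $N\trianglelefteq H_1$. Then $G'N$ is a subgroup by Lemma~\ref{lemx}, and $G'N = G'\ltimes N$ because $G\cap P=\{e\}$. To see that $G'N$ is normal, note that conjugating $G'$ by $h\in H$ gives $x^h = x\,[x^{-1},h]^{\pm}$-type expressions, that is, elements of $G'\cdot[G,H]\subseteq G'N$. Conjugation by $G$ preserves $G'$. Hence the normal closure equals $G'\ltimes\gamma_2^G(H)$.

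The main obstacle is precisely this normality bookkeeping in (c): checking that the conjugates of $G'$ by $H$ land back in $G'[G,H]$. We handle it via the identity $h x h^{-1}=[h,x]\,x$ with $[h,x]\in[H,G]=[G,H]$.

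For (d), take $x\in H_1'\cap P$ and write $x=g n$ with $g\in G'$ and $n\in N\subseteq P$. Then $g = x n^{-1}\in G\cap P=\{e\}$, so $x=n$. This gives $H_1'\cap P=\gamma_2^G(H)$, which is contained in $H$ by (b).
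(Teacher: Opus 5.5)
Your proof is correct and follows the same structure as the paper's. In (b) you show that $G$ normalizes both $H'$ and $[G,H]$; in (c) you establish the description $(G\ltimes H)' = G'\ltimes [G,H]H'$; and in (d) you read the result off from (c) using $G\cap P=\{e\}$. The one difference is that the paper simply cites (a) and (c) from the literature. You instead verify them directly with the identities $[a,bc]=[a,b][a,c]^b$, $[ac,b]=[c,b]^a[a,b]$ and $hxh^{-1}=[h,x]x$, which makes your argument self-contained.
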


\begin{proof}
Proof of (a). It follows from \cite{groupprops} (see also \cite{burde2016, holt2015}).

Proof of (b). From $H\trianglelefteq H_1$, we obtain $H'\trianglelefteq H_1$ and $G\subseteq N_{H_1}(H')\cap N_{H_1}(H)$. Therefore, for any element $g\in G$, we have that \begin{equation*}
    (\gamma_2^G(H))^g = ([G, H]H')^g = [G^g, H^g](H')^g = [G, H]H' = \gamma_2^G(H).
\end{equation*}

    Proof of (c). This follows by applying \cite{holt2013} to the semidirect product $H_1 = G \ltimes P$.

    Proof of (d). Applying Part (c), we get $H_1'\cap P = \gamma_2^G(H)$. To prove the final inclusion, note that $\gamma_2^G(H) \subseteq H$ holds from the fact that $H$ is $G$-invariant.
\end{proof}

\begin{lemma}\label{lemma6}\label{lem4}\label{lem5}\label{lemmaX}
    In the situation of Theorem 1, consider any $G$-invariant subgroup $H$ of $P$ such that $[P : H] = p$. Denote $H_1 := GH \overset{\text{Lem. }\ref{lemma2}}{=}G\ltimes H$ and $X := \{x \in H_1 \mid [x, P_1] \subseteq H_1'\}$. Then:\begin{enumerate}[label=(\alph*)]
        \item The set $X$ is a subgroup of $H_1$ that is normal in $P_1$.
    \item In the case when $H\not= M$, at least one of the following holds:
    \begin{enumerate}
    \item[(i)] $H_1' = P_1'$,
        \item[(ii)] the set $X$ is a proper subgroup of $H_1$ that contains $G \gamma_2^G(H) \overset{\text{Lems. }\ref{lemma2}\text{ and }\ref{lem9}\text{(b)}}{=} G\ltimes \gamma_2^G(H)$.
    \end{enumerate} 
    \item In the case when $H_1'\subsetneq P_1'$ and $X\cap H\subsetneq H$, we have that $b_{H_1}(h) \leq b_{P_1}(h) - 1$ for all $h \in H \setminus X$.
    \item $\gamma_2^G(P) = \gamma_2^G(H)\circ [x, P_1]$, $\forall x\in P\setminus H$ (here $A\circ B := \{ab\ \vert\ a\in A, b\in B\}$ for each subsets $A, B$ of $P_1$).
    \end{enumerate}
\end{lemma}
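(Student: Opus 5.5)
The plan is to treat $H_1 = G\ltimes H$ as a maximal subgroup of $P_1$ and to reduce all four parts to elementary facts about a maximal subgroup of a $p$-group. Since $H$ is $G$-invariant of index $p$ in $P$, Lemma \ref{lemma2} gives $[P_1 : H_1] = [P:H] = p$. So $H_1$ is maximal in $P_1$, hence $H_1 \trianglelefteq P_1$, and therefore $H_1' \trianglelefteq P_1$.

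\textbf{Part (a).} Let $\pi : P_1 \to P_1/H_1'$ be the quotient map, as in the proof of Lemma \ref{lem2}. Then $X = \pi^{-1}(Z(\pi(P_1))) \cap H_1$. This is an intersection of two normal subgroups of $P_1$, so it is a subgroup of $H_1$ that is normal in $P_1$.

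\textbf{Part (b).} Assume $H \neq M$ and $H_1' \subsetneq P_1'$.
\begin{itemize}
\item Lemma \ref{lem2}, applied to the $p$-group $P_1$ and its maximal subgroup $H_1$, shows that $X$ is proper in $H_1$.
\item Since $H_1' \trianglelefteq P_1$, we have $[H_1', P_1] \subseteq H_1'$. Hence $H_1' \subseteq X$, and by Lemma \ref{lem9}(c) this gives $\gamma_2^G(H) \subseteq X$.
\item It remains to show $G \subseteq X$, i.e. $[G, P_1] \subseteq H_1' = G'\gamma_2^G(H)$. By Lemma \ref{lemmath1}(e), $[G,P_1] \subseteq G'[G,P]$. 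Now $H$ and $M$ are distinct subgroups of index $p$ in $P$, and $M$ is normal, so $P = HM$.
\item Lemma \ref{lem3}(a), with $B = H$ and $C = M$ (legitimate because $M \trianglelefteq P_1$ by Lemma \ref{lemmath1}(a)), gives
\[
[G,P] = [G,HM] \subseteq [G,H]\,\llbracket G,M\rrbracket^{H} = [G,H] \subseteq \gamma_2^G(H),
\]
using $[G,M] = \{e\}$.
\end{itemize}
Since $X$ is a subgroup containing both $G$ and $\gamma_2^G(H)$, it contains $G\gamma_2^G(H)$.

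\textbf{Part (c).} The hypothesis $X \cap H \subsetneq H$ only guarantees that $H \setminus X$ is nonempty. Take $h \in H \setminus X$. First we show $Z_{P_1}(h) \subseteq H_1$. Suppose instead that some $y \in Z_{P_1}(h) \setminus H_1$ exists. Then $P_1 = H_1\langle y\rangle$, and for $u \in H_1$ we get $[h, uy^k] = [h,u][h,y^k]^u = [h,u] \in H_1'$. This forces $h \in X$, a contradiction. Then the index computation
\[
[P_1 : Z_{P_1}(h)] = [P_1 : H_1 Z_{P_1}(h)]\,[H_1 : H_1 \cap Z_{P_1}(h)] = p\,[H_1 : H_1\cap Z_{P_1}(h)]
\]
gives $b_{H_1}(h) = b_{P_1}(h) - 1$.

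\textbf{Part (d).} Fix $x \in P \setminus H$. Then $P_1 = H_1\langle x\rangle$, and $[x, ux^k] = [x,u]$ shows that $[x,P_1] = [x,H_1]$.
\begin{itemize}
\item For $u, v \in H_1$ we have $[x,uv] = [x,u][x,v]^u$. Since $[x,v] \in H_1$, we get $[x,v]^u \equiv [x,v] \pmod{H_1'}$. So the image of $[x,H_1]$ in $P_1/H_1'$ is a subgroup, and $N := H_1' \circ [x,H_1]$ is a subgroup.
\item $N$ is normal in $P_1$. Modulo $H_1'$, conjugation by elements of $H_1$ fixes each $[x,u]$, and $x[x,u]x^{-1} = [x, xux^{-1}]$ with $xux^{-1} \in H_1$.
\item In $P_1/N$, the image of $H_1$ is abelian and commutes with the image of $x$, so $P_1/N$ is abelian. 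Hence $P_1' = N = G'\gamma_2^G(H) \circ [x,P_1]$, using Lemma \ref{lem9}(c).
\item Intersect with $P$, using Lemma \ref{lem9}(d) applied to $H = P$: this gives $\gamma_2^G(P) = P_1' \cap P$. An element $gac$ with $g \in G'$, $a \in \gamma_2^G(H)$, $c \in [x,P_1] \subseteq P$ lies in $P$ exactly when $g = e$. This yields $\gamma_2^G(P) = \gamma_2^G(H) \circ [x,P_1]$.
\end{itemize}

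The main obstacle is part (d), specifically checking that the product set $H_1' \circ [x,P_1]$ is a normal subgroup and not merely a set. The remaining parts are routine once $G \subseteq X$ is established in (b) via $P = HM$ and $[G,M] = \{e\}$.
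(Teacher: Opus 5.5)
Your proposal is correct and follows the paper's proof part by part. In (a) you describe $X$ via the quotient $P_1/H_1'$; in (b) you use Lemma \ref{lem2} together with $P=HM$, Lemma \ref{lem3}(a) and Lemma \ref{lemmath1}(e); in (c) you compare centralizer indices using $[P_1:H_1]=p$; and in (d) you intersect $P_1'=H_1'\circ[x,P_1]$ with $P$ using $H_1'=G'\ltimes\gamma_2^G(H)$. The only real difference is that in (d) you prove directly that $H_1'\circ[x,P_1]$ is a normal subgroup with abelian quotient, whereas the paper takes this identity from the proof of \cite[Lemma 2.2]{10}.
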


\begin{proof}
    Proof of (a). From $[P : H] = p$, we get $[P_1 : H_1] = [P : H] = p$. Therefore, $H_1 \trianglelefteq P_1$, which implies $H_1' \trianglelefteq P_1$. Consider the natural homomorphism of factorization $\pi : P_1 \to P_1/H_1'$. Then we can express $X$ as
    \begin{equation*}
        X = \pi^{-1}(Z(\pi(P_1))) \cap H_1,
    \end{equation*}
    which is clearly a subgroup of $H_1$. From $H_1, \pi^{-1}(Z(\pi(P_1)))\trianglelefteq P_1$, we have that $X\trianglelefteq P_1$.

Proof of (b). Assume that $H_1' \subsetneq P_1'$, then Property (ii) holds due to the following arguments:
    \begin{enumerate}
    \item From (a), it follows that $X$ is a subgroup of $H_1$.
    \item From Lemma \ref{lem3}(a), Lemma \ref{lemmath1}(b), and the facts that $H\not= M$, $[P : H] = [P : M] = p$, it follows that $P = HM$ and:
    \begin{equation*}
        [G, H] \subseteq [G, P] = [G, HM] \subseteq [G, H]\llbracket G, M\rrbracket^{P_1} = [G, H]
    \end{equation*}
    
    and, therefore, \begin{equation}
        [G, P] = [G, H].\label{equationPH}
    \end{equation}
    
        \item $X$ is a proper subgroup of $H_1$: this follows directly from Lemma \ref{lem2}.
        \item $X$ contains $G$: from Lemma \ref{lemmath1}(e) and (4.1), we obtain 
        \begin{equation*}
    \begin{split}
        [G, P_1] &\subseteq G'[G, P] \subseteq H_1'[G, P] =\\ &= [G, P]H_1' = [G, H]H_1' = H_1'.
    \end{split}
\end{equation*}
        \item It contains $\gamma_2^G(H)$: Since $[P_1 : H_1] = [P : H] = p$, we have $H_1 \trianglelefteq P_1$, which implies $H_1' \trianglelefteq P_1$. So $[H_1', P_1]\subseteq H_1'$, $\gamma_2^G(H) = [G, H]H'\subseteq H_1'H' \subseteq H_1'$, and \begin{equation*}
            [\gamma_2^G(H), P_1] \subseteq [H_1', P_1] \subseteq H_1'.
        \end{equation*}
        \item Therefore, it contains the product $G \gamma_2^G(H)$.
    \end{enumerate}

    Proof of (c). Consider any element $h\in H$ such that $b_{H_1}(h) > b_{P_1}(h) - 1$. Since $H_1 \subseteq P_1$, it is easy to see that then $b_{H_1}(h) = b_{P_1}(h)$, which implies $[h, P_1] = [h, H_1]\subseteq [H, H_1]\subseteq H_1'$, i.e. that $h\in X$.

    Proof of (d). Consider any element $x\in P\setminus H$. From the proof of \cite[Lemma 2.2]{10} and the facts that $[P_1 : H_1] = [P : H] = p$ and $x\in P\setminus H\subseteq P_1\setminus H_1$, it follows that \begin{equation}
        P_1' = H_1'\circ [x, P_1],\text{ where }A\circ B := \{ab\ \vert\ a\in A, b\in B\}\text{ for each subsets }A, B\text{ of }P_1.\label{eqphx}
    \end{equation}

 Therefore, from \eqref{eqphx}, Lemma \ref{lem9}(c) (applied to $P, H\subseteq P$), Lemma \ref{lem9}(b) (applied to $H\subseteq P$), and Lemma \ref{lemma2}, it follows that \begin{equation*}\begin{split}&\gamma_2^G(P) = P_1'\cap P = (H_1'\circ [x, P_1])\cap P = \\ &= ((P'\gamma_2^G(H))\circ [x, P_1])\cap P = ((P'\ltimes\gamma_2^G(H))\circ [x, P_1])\cap P = \gamma_2^G(H)\circ [x, P_1].\end{split}\end{equation*}
\end{proof}

\begin{lemma}\label{lemg1g21}
    Consider any $p$-group $\widetilde{G}$ and any its normal subgroups $G_1, G_2$ such that $G_1G_2 = \widetilde{G}$. Let $C$ be a maximal subgroup of $G_2$. Assume that $[G_1, C] = \{e\}$ and $C$ is a normal subgroup of $\widetilde{G}$. Then $[G_1, x] = [G_1, G_2]$ for each element $x\in G_2\setminus C$.
\end{lemma}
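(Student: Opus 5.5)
The plan is to show directly that the set $[G_1,x]$ is closed under multiplication (hence a subgroup, since $\widetilde G$ is finite) and that it contains a generating set of $[G_1,G_2]$. The inclusion $[G_1,x]\subseteq [G_1,G_2]$ is trivial, so these two facts give the equality.

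\textbf{Step 1: $[G_1,G_2]\subseteq C\cap G_1$.} Since $G_1,G_2\trianglelefteq\widetilde G$, we have $[G_1,G_2]\subseteq G_1\cap G_2$. Since $C$ is a maximal subgroup of $G_2$ and $C\trianglelefteq\widetilde G$, the quotient $G_2/C$ has order $p$ and is normal in the $p$-group $\widetilde G/C$. A normal subgroup of order $p$ in a $p$-group is central. So $\widetilde G$ acts trivially on $G_2/C$, which gives $[G_1,G_2]\subseteq[\widetilde G,G_2]\subseteq C$. In particular every commutator $[g,y]$ with $g\in G_1$, $y\in G_2$ lies in $C$, and therefore commutes with every element of $G_1$, because $[G_1,C]=\{e\}$.

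\textbf{Step 2: $[G_1,x]$ is a subgroup.} With the paper's convention $[a,b]=aba^{-1}b^{-1}$, we have $[gh,x]=g[h,x]g^{-1}\,[g,x]$. By Step 1, $[h,x]\in C$ commutes with $g\in G_1$, so $[gh,x]=[h,x][g,x]$. Thus $[G_1,x]$ is closed under products, and being a finite nonempty set it is a subgroup. (In fact $g\mapsto[g,x]$ is then an anti-homomorphism $G_1\to C$.)

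\textbf{Step 3: generators.} Since $x\notin C$ and $[G_2:C]=p$, we have $G_2=\langle x\rangle C$. For $c\in C$ and $g\in G_1$ the identity $[g,x^kc]=[g,x^k]\,[g,c]^{x^k}$ together with $[g,c]=e$ gives $[g,x^kc]=[g,x^k]$. Hence $[G_1,G_2]$ is generated by the elements $[g,x^k]$. We show by induction on $k$ that $[g,x^k]\in[G_1,x]$. Write $g^x$ for $xgx^{-1}$ and $(\cdot)^x$ for conjugation by $x$, as in the paper's notation $a^b=bab^{-1}$. Then
\[
[g,x^{k+1}]=[g,x]\,[g,x^k]^x=[g,x]\,[g^x,x^k].
\]
Here $g^x\in G_1$ because $G_1\trianglelefteq\widetilde G$, so $[g^x,x^k]\in[G_1,x]$ by the induction hypothesis. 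Since $[G_1,x]$ is a subgroup by Step 2, it follows that $[g,x^{k+1}]\in[G_1,x]$.

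Combining Steps 2 and 3, the subgroup $[G_1,x]$ contains a generating set of $[G_1,G_2]$, so $[G_1,G_2]\subseteq[G_1,x]$, and with the trivial inclusion we get $[G_1,x]=[G_1,G_2]$. The main point needing care is Step 1: it is what ensures that the commutators $[h,x]$ commute with $G_1$, which makes the commutator map multiplicative in Step 2. It uses both $C\trianglelefteq\widetilde G$ and the $p$-group hypothesis.
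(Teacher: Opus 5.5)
Your proof is correct. Your Steps 1 and 2 match the paper's: the paper also first shows $[\widetilde G, G_2]\subseteq C\subseteq Z_{\widetilde G}(G_1)$, and then shows that $[G_1,x]$ is closed under multiplication via $[a,x][b,x]=[a,x]^b[b,x]=[ba,x]$.

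You diverge from the paper only in the final inclusion $[G_1,G_2]\subseteq[G_1,x]$. The paper additionally proves that $[G_1,x]$ is normal in $\widetilde G$, using $[g,x]^y=[g^y,x]\,[g^y,[x^{-1},y]]^x$ together with $[x^{-1},y]\in C$. It then passes to $\widetilde G/[G_1,x]$ to reduce to the case $[G_1,x]=\{e\}$, and concludes via Lemma~\ref{lem3}(a) that $G_2=\langle x\rangle C$ centralizes $G_1$. You instead stay inside $\widetilde G$. You first reduce $[g,x^kc]$ to $[g,x^k]$, then show by induction that $[g,x^{k+1}]=[g,x]\,[g^x,x^k]\in[G_1,x]$. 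This needs only that $x$ normalizes $G_1$ and the subgroup property from Step 2.

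Your route is slightly more economical, since it requires no normality check for $[G_1,x]$ and no quotient. The paper's route gives the extra fact $[G_1,x]\trianglelefteq\widetilde G$, and its last step becomes a one-line centralizer argument.
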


\begin{proof}
    Consider any element $x\in G_2\setminus C$. Consider the natural homomorphism of factorization $\pi : \widetilde{G}\to\widetilde{G}/C$ ($C\trianglelefteq \widetilde{G}$ by the assumptions of the lemma). From $G_2\trianglelefteq\widetilde{G}$ and $[G_2 : C] = p$, we obtain $\pi(G_2)\trianglelefteq\pi(\widetilde{G})$ and $|\pi(G_2)| = p$, therefore, $\pi(G_2)\subseteq Z(\pi(\widetilde{G}))$ and $[\pi(G_2), \pi(\widetilde{G})] = \{e\}$. So, combining $[G_1, C] = \{e\}$ and $[\pi(G_2), \pi(\widetilde{G})] = \{e\}$, we obtain \begin{equation}
        [\widetilde{G}, G_2]\subseteq\ker\pi = C\subseteq Z_{\widetilde{G}}(G_1).\label{eqgg2}
    \end{equation}
    
    We have that $[G_1, x]$ is a normal subgroup of $\widetilde{G}$, because:
    \begin{enumerate}
        \item $[G_1, x]$ is a subgroup of $\widetilde{G}$, because: \begin{equation*}
            \forall a, b\in G_1,\quad [a, x][b, x] = (\underbrace{[a, x]}_{\mathclap{\in [\widetilde{G}, G_2]\underset{\eqref{eqgg2}}{\subseteq} Z_{\widetilde{G}}(G_1)}})^b[b, x] = [ba, x].
        \end{equation*}
        \item $[G_1, x]\trianglelefteq \widetilde{G}$, because:\begin{equation*}\begin{split}
                g\in G_1,\ \forall y\in \widetilde{G},\quad [g, x]^y &= [g^y, x^y] = [g^y, x[x^{-1}, y]] =\\ &= [g^y, x]\underbrace{[\overbrace{g^y}^{\in G_1}, \underbrace{[x^{-1}, y]}_{\mathclap{\in [G_2, \widetilde{G}]\underset{\eqref{eqgg2}}{\subseteq} Z_{\widetilde{G}}(G_1)}}]^x}_{\mathclap{\subseteq [G_1, Z_{\widetilde{G}}(G_1)]^x = \{e\}}} = [\underbrace{g^y}_{\in G_1}, x]\in [G_1, x].
            \end{split}\end{equation*}
    \end{enumerate}
    Obviously, $[G_1, G_2]\supseteq [G_1, x]$, so after the factorization $\widetilde{G}\to \widetilde{G}/[G_1, x]$, we may reduce this lemma to the case $[G_1, x] = \{e\}$. From $[G_1, x] = [G_1, C] = \{e\}$, the facts that $C\trianglelefteq \widetilde{G}$, $G_2 = \langle x\rangle C$, and Lemma \ref{lem3}(a), we obtain $\langle x\rangle\subseteq Z_{\widetilde{G}}(G_1)$ and \begin{equation*}
        [G_1, G_2] = [G_1, \langle x\rangle C] \subseteq [G_1, \langle x\rangle]\llbracket G_1, C\rrbracket^{\langle x\rangle} = \{e\}\subseteq [G_1, x]\subseteq [G_1, G_2].
    \end{equation*}
\end{proof}

\begin{lemma}\label{lemmafin}
    Consider any $p$-group $A$ and any its normal subgroup $N$. Then there exists a sequence of normal subgroups $N_i$, $0\leq i\leq \log_p|N|$ of $A$ such that:\begin{enumerate}
    \item $N_i\subseteq N$, $\forall 0\leq i\leq \log_p|N|$,
    \item $N_i\subseteq N_{i+1}$, $\forall 0\leq i\leq \log_p|N| - 1$,
\item $N_0 = \{e\}$,
\item $N_{\log_p|N|} = N$,
    \item $[N_{i+1} : N_i] = p$, $\forall 0\leq i\leq\log_p|N| - 1$,
    \item $[A, N_{i+1}]\subseteq N_i$, $\forall 0\leq i\leq\log_p|N| - 1$.
\end{enumerate}
\end{lemma}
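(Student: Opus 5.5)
We argue by induction on $\log_p|N|$, and build the chain from the bottom. This is the standard refinement of a normal subgroup of a $p$-group into an $A$-central series of normal subgroups with factors of order $p$.

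If $N=\{e\}$, the sequence consisting only of $N_0=\{e\}$ works, since conditions 5 and 6 are vacuous. Assume now $N\neq\{e\}$.

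The key step is to find a normal subgroup $N_1\trianglelefteq A$ with $N_1\subseteq N$, $|N_1|=p$ and $[A,N_1]=\{e\}$. Since $N$ is a nontrivial normal subgroup of the $p$-group $A$, the class equation for the conjugation action of $A$ on $N$ gives $N\cap Z(A)\neq\{e\}$. Indeed, the orbits of $N\setminus (N\cap Z(A))$ have sizes divisible by $p$, so $|N\cap Z(A)|\equiv |N|\equiv 0 \pmod p$, and $e\in N\cap Z(A)$. Choose any subgroup $N_1\subseteq N\cap Z(A)$ of order $p$. Then $N_1$ is central in $A$, so it is normal in $A$ and satisfies $[A,N_1]=\{e\}=N_0$.

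Next, pass to the quotient $\pi:A\to A/N_1$. Here $\pi(N)\trianglelefteq\pi(A)$ and $\log_p|\pi(N)|=\log_p|N|-1$. By the induction hypothesis there is a chain of normal subgroups $\bar N_0=\{e\}\subseteq\bar N_1\subseteq\dots\subseteq\bar N_{\log_p|N|-1}=\pi(N)$ of $\pi(A)$ satisfying conditions 1–6. Set $N_{i+1}:=\pi^{-1}(\bar N_i)$ for $0\le i\le\log_p|N|-1$, and keep $N_0=\{e\}$. Note that $N_1=\pi^{-1}(\{e\})$, consistent with the choice above.

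It remains to verify the conditions. Preimages of normal subgroups are normal, and they contain $N_1$. Since $N_1\subseteq N$, we have $\pi^{-1}(\pi(N))=N$, which gives conditions 1, 2 and 4. Indices are preserved under taking preimages, so $[N_{i+2}:N_{i+1}]=[\bar N_{i+1}:\bar N_i]=p$, and together with $|N_1|=p$ this gives condition 5. For condition 6 with $i\ge1$, the inclusion $[\pi(A),\bar N_i]\subseteq\bar N_{i-1}$ pulls back to $[A,N_{i+1}]\subseteq\pi^{-1}(\bar N_{i-1})=N_i$. For $i=0$, condition 6 is $[A,N_1]=\{e\}$, which holds by centrality.

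The only non-formal step is producing a central subgroup of order $p$ inside $N$, which is handled by the class-equation argument above. Everything else is routine correspondence-theorem bookkeeping.
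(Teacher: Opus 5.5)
Your proposal is correct and follows the same route as the paper: induction on $|N|$, choosing a central subgroup of order $p$ inside $N$, applying the induction hypothesis to the quotient by it, and pulling the chain back with $N_0=\{e\}$ prepended. You also supply the class-equation justification for $N\cap Z(A)\neq\{e\}$ and the verification of conditions 1--6, both of which the paper only asserts.
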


\begin{proof}
    The proof is by induction on $|N|$. The base case $|N| = 1$ is trivial. Induction step: Assume that $|N|>1$. From $N\trianglelefteq A$ it follows that $N\cap Z(A)\not=\{e\}$ and there exists an element $e\not= x\in N\cap Z(G)$ such that $x^p = e$. After the application of the induction hypothesis to $A/\langle x\rangle$, we get that there exists a sequence $K_i$, $0\leq i\leq \log_p|N| - 1$ of normal subgroups of $A/\langle x\rangle$ that satisfies Properties 1 -- 6 for $A/\langle x\rangle$ and $N/\langle x\rangle\trianglelefteq A/\langle x\rangle$. Easy to check that $N_0 := \{e\}$, $N_i := \langle x\rangle K_{i-1}$, $\forall 1\leq i\leq\log_p|N|$ satisfies Properties 1 -- 6 of the lemma.
\end{proof}

\subsection{Proof of Theorem \ref{th1}}

We proceed by induction on the order $|P|$ of the $G$-group $P$. The base case $|P| = 1$ is trivial. Consider any $G$-group $P$ of order $\geq p$ and assume that the statement is proved for all smaller $G$-groups $P$.

\bigskip

Assume that $P/\gamma_2^G(P)$ is cyclic ($P'\subseteq \gamma_2^G(P)\subseteq P$, so $\gamma_2^G(P)\trianglelefteq P$).

\bigskip

    From Lemmas \ref{lemma2} and \ref{lemy}(b), it follows that $\gamma_2^G(P)$ is a $G$-invariant subgroup of $P$ and $G\gamma_2^G(P) = G\ltimes \gamma_2^G(P)$. From Definitions \ref{defgh} and \ref{def6}, it follows that there exist two proper subgroups $Q_1, Q_2$ of $P$ such that $\gamma_2^G(P) \subseteq Q_1 \cap Q_2$ and 
    \begin{equation*}
        b_{P_1}(x) \leq \mathcal{M}^G(P), \quad \forall x \in P\setminus (Q_1 \cup Q_2).
    \end{equation*}
    
    Since $\gamma_2^G(P) \subseteq Q_1 \cap Q_2$ and the quotient group $P/\gamma_2^G(P)$ is cyclic, we have $Q_1 \cup Q_2 \subseteq Q$, where $Q$ is the unique maximal subgroup of $P$ containing $\gamma_2^G(P)$. Therefore,
    \begin{equation}
        b_{P_1}(x) \leq \mathcal{M}^G(P), \quad \forall x\in P\setminus Q. \label{eq1}
    \end{equation}

    Consider any element \begin{equation}
        a\in P\setminus (Q\cup M)\label{eqelx}
    \end{equation} (such an element exists since a group $P$ cannot be covered by its two proper subgroups $Q$ and $M$). Combining the facts that $a\in P\setminus Q$ and \eqref{eq1}, we obtain \begin{equation}
        b_{P_1}(a)\leq\mathcal{M}^G(P).\label{eqclm}
    \end{equation}

From Lemma \ref{lem9}(a) applied to $P\subseteq P$, we obtain $[G, P]\trianglelefteq P_1$. Consider the natural homomorphism of factorization $\pi : P_1\to P_1/[G, P]$. From $\pi(P)/\pi(P)' = \pi(P/P') = P/([G, P]P') = P/\gamma_2^G(P)$ -- is a cyclic group, it follows that $\pi(P)$ -- is a cyclic group and, therefore, $\pi(P') = \pi(P)' = \{e\}$, which implies $P'\subseteq\ker\pi = [G, P]$ and \begin{equation}
    \gamma_2^G(P) = [G, P]P' = [G, P].\label{eqpker}
\end{equation}

From Lemma \ref{lemmath1}(c), we get $[\langle\!\langle G\rangle\!\rangle^{P_1}, M] = \{e\}$. Therefore, after the application of Lemma \ref{lemg1g21} to $\widetilde{G} := P_1$, $G_1 := \langle\!\langle G\rangle\!\rangle^{P_1}$, $G_2 := P$, $C := M$, $x := a$, we get that $[\langle\!\langle G\rangle\!\rangle^{P_1}, a] = [\langle\!\langle G\rangle\!\rangle^{P_1}, P]$ and, therefore, \begin{equation}
    \log_p|[\langle\!\langle G\rangle\!\rangle^{P_1}, P]| = \log_p|[\langle\!\langle G\rangle\!\rangle^{P_1}, a]|\leq \log_p|[P_1, a]| = b_{P_1}(a).\label{equationsublemma2}
\end{equation}

Combining \eqref{eqclm}, \eqref{eqpker}, and \eqref{equationsublemma2}, we obtain
\begin{equation}
    \log_p|\gamma_2^G(P)| = \log_p|[G, P]|\leq \log_p|[\langle\!\langle G\rangle\!\rangle^{P_1}, P]| \leq b_{P_1}(a) \leq \mathcal{M}^G(P).\label{equationlemma}
\end{equation}

From $P'\subseteq \gamma_2^G(P)\subseteq P$ and Lemma \ref{lem9}(b) it follows that $\gamma_2^G(P)$ is a normal and $G$-invariant subgroup of $P$. So $P, G\subseteq N_{P_1}(\gamma_2^G(P))$ and $\gamma_2^G(P)\trianglelefteq GP = P_1$. Therefore, from Lemma \ref{lemmafin} applied to $\gamma_2^G(P)\trianglelefteq P_1$, it follows that there exists a sequence $K_j$, $0\leq j\leq \log_p|\gamma_2^G(P)|$ of normal subgroups of $P_1$ such that:\begin{enumerate}
    \item[(1)] $K_j\subseteq \gamma_2^G(P)$, $\forall 0\leq j\leq \log_p|\gamma_2^G(P)|$,
    \item[(2)] $K_j\subseteq K_{j+1}$, $\forall 0\leq j\leq \log_p|\gamma_2^G(P)| - 1$,
    \item[(3)] $K_0 = \{e\}$,
    \item[(4)] $K_{\log_p|\gamma_2^G(P)|} = \gamma_2^G(P)$,
    \item[(5)] $[K_{j+1} : K_j] = p$, $\forall 0\leq j\leq \log_p|\gamma_2^G(P)| - 1$,
    \item[(6)] $[P_1, K_{j+1}]\subseteq K_j$, $\forall 0\leq j\leq \log_p|\gamma_2^G(P)| - 1$.
\end{enumerate}

We will prove that the sequence $C_i$, $0\leq i\leq \log_p|\gamma_2^G(P)| + 1$ of subgroups of $P$, defined as \begin{equation}\begin{split}
    C_0 &:= P\\ C_i &:= K_{\log_p|\gamma_2^G(P)|- i + 1},\quad \forall 1\leq i\leq \log_p|\gamma_2^G(P)| + 1
\end{split}\label{eqCK}
\end{equation} is a $G$-central series of $P$ that satisfies Properties 1 and 2 of Conjecture \ref{conj1}, which would complete the proof for the case of cyclic $P/\gamma_2^G(P)$. This follows from: \begin{enumerate}[label=\arabic*)]
    \item $C_i$, $0\leq i\leq \log_p|\gamma_2^G(P)| + 1$ is a $G$-central series of $P$ that satisfies Property 1 of Conjecture \ref{conj1}, because from \eqref{eqCK} and Properties (2), (3), (4), (6) of $K_j$ we obtain:\begin{enumerate}
        \item $C_0 = P$,
        \item $C_{\log_p|\gamma_2^G(P)| + 1} = K_0 = \{e\}$,
        \item $C_1 \subseteq P = C_0$
        \item $C_{i+1} = K_{\log_p|\gamma_2^G(P)| - i}\subseteq K_{\log_p|\gamma_2^G(P)| - i + 1} = C_i$, $\forall 1\leq i\leq \log_p|\gamma_2^G(P)|$,
        \item $[G, C_0] = [G, P]\subseteq \gamma_2^G(P) = K_{\log_p|\gamma_2^G(P)|} = C_1$,
        \item $[G, C_i] = [G, K_{\log_p|\gamma_2^G(P)| - i + 1}]\subseteq [P_1, K_{\log_p|\gamma_2^G(P)| - i + 1}]\subseteq$ 
        
        $\subseteq [P_1, K_{\log_p|\gamma_2^G(P)| - i}] = C_{i+1}$, $\forall 1\leq i\leq \log_p|\gamma_2^G(P)|$,
        \item $C_0 = P$,
        \item $C_1 = K_{\log_p|\gamma_2^G(P)|} = \gamma_2^G(P)$,
        \item $C_{\log_p|\gamma_2^G(P)| + 1} = K_0 = \{e\}$.
    \end{enumerate}
    \item $C_i$, $0\leq i\leq \log_p|\gamma_2^G(P)| + 1$ satisfies Property 2 of Conjecture \ref{conj1}, because: \begin{enumerate}
    \item in the case when $\log_p|\gamma_2^G(P)| = 0$, there is no such $1\leq i\leq \log_p|\gamma_2^G(P)|$ and Property 2 of Conjecture \ref{conj1} is trivially satisfied,
        \item in the case when $\log_p|\gamma_2^G(P)| > 0$, from Property (5), \eqref{equationlemma}, and \eqref{eqCK} it follows that $\forall 1\leq i\leq \log_p|\gamma_2^G(P)|$,\begin{equation*}\begin{split}
    \log_p[C_i : C_{i+1}] &= \log_p[K_{\log_p|\gamma_2^G(P)|- i + 1} : K_{\log_p|\gamma_2^G(P)|- i}] = 1\leq\\ &\leq \log_p|\gamma_2^G(P)|- i + 1\leq \mathcal{M}^G(P) - i + 1.\end{split}
\end{equation*}
    \end{enumerate}
\end{enumerate}

This concludes the case where $P/\gamma_2^G(P)$ is cyclic.

\bigskip

Assume that $P/\gamma_2^G(P)$ is non-cyclic ($P'\subseteq \gamma_2^G(P)\subseteq P$, so $\gamma_2^G(P)\trianglelefteq P$).

\bigskip

Consider any two maximal subgroups $C_1, C_2$ of $P$ such that:
    \begin{enumerate}[label=\arabic*)]
        \item[i)] The following hold: \begin{enumerate}
            \item $C_1, C_2$ are $G$-invariant,
            \item $\gamma_2^G(P)\subseteq C_1\cap C_2$,
            \item $\{x\in P \mid b_{P_1}(x) > \mathcal{M}^G(P)\}\subseteq C_1\cup C_2$.
        \end{enumerate} Such subgroups exist by the definition of $\mathcal{M}^G(P)$ and from the fact that each proper subgroup of $P$ containing $\gamma_2^G(P)$ is contained in a maximal subgroup of $P$ which also contains $\gamma_2^G(P)$ (which is therefore $G$-invariant).
        \item[ii)] $C_1 \neq C_2$. These exist, because, if $C_1 = C_2$ in Property i), then, from the fact that $P\gamma_2^G(P)$ is non-cyclic, we can replace $C_2$ with $C_2^*$, where $C_2^*$ is any maximal subgroup of $P$ containing $\gamma_2^G(P)$ (which is therefore $G$-invariant) and is different from $C_1$.
    \end{enumerate}
    Consider any maximal subgroup $H$ of $P$ such that:
    \begin{enumerate}
        \item $C_1 \cap C_2 = H \cap C_1 = H \cap C_2 = H\cap (C_1\cup C_2)$,
        \item $H\not= M$.
    \end{enumerate} Such a group exists since $p>2$, $P/(C_1\cap C_2)\cong \mathbb{Z}_p^2$, and $M\subsetneq P$. From $\gamma_2^G(P)\subseteq C_1\cap C_2 = H\cap C_1 \subseteq H\subseteq P$, it follows that $H$ is a $G$-invariant subgroup of $P$ and contains $\gamma_2^G(P)$. Therefore, from Lemma \ref{lemma2}, we get $H_1 := GH \overset{\text{Lem. }\ref{lemma2}}{=} G\ltimes H$. From $[P : H] = p$, $[P : C_1\cap C_2] = p^2$, it follows that $[H : C_1\cap C_2] = p$ and $C_1\cap C_2$ is a proper subgroup of $H$.

    We now prove the following two properties of $H$:
    \begin{enumerate}[label=(\arabic*)]
            \item[A1.] $\mathcal{M}^G(H) \leq \begin{cases}
            \mathcal{M}^G(P) & \text{if } H_1' = P_1', \\
            \mathcal{M}^G(P) - 1 & \text{if } H_1' \subsetneq P_1',
        \end{cases}$
        \item[A2.] $\log_p|\gamma_2^G(P)|\leq\begin{cases}
            \log_p|\gamma_2^G(H)| & \text{if } H_1' = P_1', \\
            \log_p|\gamma_2^G(H)| + \mathcal{M}^G(P) & \text{if } H_1' \subsetneq P_1'.
        \end{cases}$
    \end{enumerate}

\bigskip
\noindent
\textbf{Proof of A1 and A2 in the case when $H_1' = P_1'$.}
    \begin{itemize}
        \item Proof of A1. From the definition of $C_1, C_2$ and the fact that $H\setminus (C_1\cap C_2) = H\setminus (H\cap (C_1\cup C_2)) = H\setminus (C_1\cup C_2) \subseteq P\setminus (C_1\cup C_2)$ we obtain \begin{equation*}\begin{split}
            &\gamma_2^G(H)\subseteq \gamma_2^G(P)\subseteq C_1\cap C_2\text{ and}
            \\ &b_{H_1}(h) \leq b_{P_1}(h) \leq \mathcal{M}^G(P),\quad\forall h \in H \setminus (C_1 \cap C_2).
        \end{split}
        \end{equation*}
        
        Therefore, the set $\{h \in H \mid b_{H_1}(h) > \mathcal{M}^G(P)\}$ can be covered by two coinciding proper subgroups $A = B = C_1 \cap C_2$ of $H$, which contain $\gamma_2^G(H)$.
        \item Item A2 follows from $H_1' = P_1'$ and Lemma \ref{lem10}(d) applied to $P\subseteq P$ and to $H\subseteq P$: \begin{equation*}
\log_p|\gamma_2^G(P)| = \log_p|P_1'\cap P| = \log_p|H_1'\cap P| = \log_p|\gamma_2^G(H)|.
\end{equation*}

    \end{itemize}

    \bigskip
\noindent
\textbf{Proof of A1 and A2 in the case when $H_1' \subsetneq P_1'$.} Denote $X := \{x \in H_1 \mid [x, P_1] \subseteq H_1'\}$. By Lemma \ref{lem4}(a, b) \begin{equation}
    G\gamma_2^G(H)\subseteq X\subsetneq H_1\text{ and }X\trianglelefteq P_1.\label{eqXH}
\end{equation}

We have that $H\nsubseteq X$, since otherwise, $X$ would be a proper subgroup of $H_1$ containing both $G$ and $H$, which contradicts to $H_1 = GH$. From Lemma \ref{lem5}(c), we obtain $b_{H_1}(h) \leq b_{P_1}(h) - 1$ for all $h \in H\setminus X$. Thus, from the fact that $H\setminus (X\cup (C_1\cap C_2))\subseteq H\setminus (\underbrace{C_1\cap C_2}_{\mathclap{ = H\cap (C_1\cup C_2)}}) = H\setminus (C_1\cup C_2)\subseteq P\setminus (C_1\cup C_2)$ and the definition of $C_1, C_2$, it follows that
    \begin{equation*}
        b_{H_1}(h) \leq b_{P_1}(h) - 1 \leq \mathcal{M}^G(P) - 1, \quad \forall h \in H \setminus (X \cup (C_1 \cap C_2)).
    \end{equation*}

    Therefore, the set $\{h \in H \mid b_{H_1}(h) > \mathcal{M}^G(G) - 1\}$ is covered by two subgroups $X\cap H$ and $C_1 \cap C_2$ of $H$. From the definition of $C_1, C_2$, we have $\gamma_2^G(H)\subseteq \gamma_2^G(P)\subseteq C_1\cap C_2$, therefore, from $H\nsubseteq X$, $[H : C_1\cap C_2] = p$, and \eqref{eqXH}, it follows that $X\cap H$ and $C_1\cap C_2$ are proper subgroups of $H$ that contain $\gamma_2^G(H)$. Hence, $\mathcal{M}^G(H) \leq \mathcal{M}^G(P) - 1$, and A1 is proved.
    
    To prove A2, consider any element $a\in P\setminus (H\cup C_1\cup C_2)$ (such an element exists from the fact that each non-trivial $p>2$ -group cannot be covered by any its three proper subgroups). From $a\in P\setminus (H\cup C_1\cup C_2)\subseteq P\setminus (C_1\cup C_2)$ and the definition of $C_1, C_2$, we obtain \begin{equation}
    b_{P_1}(a)\leq \mathcal{M}^G(P).\label{equationx2}
\end{equation}

From Lemma \ref{lemmaX}(d) applied to $H\subseteq P$, $a\in P\setminus H$, and \eqref{equationx2}, it follows that \begin{equation}\begin{split}
        \log_p|\gamma_2^G(P)| &= \log_p|\gamma_2^G(H)\circ [a, P_1]|\leq\\&\leq \log_p|\gamma_2^G(H)| + \log_p|[a, P_1]| =\\&= \log_p|\gamma_2^G(H)| + b_{P_1}(a)\leq \log_p|\gamma_2^G(H)| + \mathcal{M}^G(P).
    \end{split}\label{equationx1}\end{equation}
    
Therefore, A1 and A2 are proved.

\bigskip

We are now ready to complete the proof in the case when $P/\gamma_2^G(P)$ is non-cyclic.

\bigskip

From the induction hypothesis applied to the $G$-group $H$ and its normal and $G$-invariant subgroup $M\cap H$, it follows that there exists a $G$-central series $K_i$, $0\leq i\leq N$ of $H$ such that: $K_0 = H$, $K_1 = \gamma_2^G(H)$, $K_N = \{e\}$, and \begin{equation}
        \log_p[K_i : K_{i+1}]\leq \mathcal{M}^G(H) - i + 1,\ \forall 1\leq i\leq N - 1.\label{eqK}
    \end{equation}

Let us analyze the two possible cases:

\bigskip

\noindent\textbf{Case 1.} Consider the case when $H_1' = P_1'$. From Lemma \ref{lem9}(d) and $P_1' = H_1'$, we obtain \begin{equation}
    \gamma_2^G(P) = P_1'\cap P = H_1'\cap P = \gamma_2^G(H).\label{equationcpk}
\end{equation}

We will prove that the sequence $C_i$, $0\leq i\leq N$ of subgroups of $P$, defined as \begin{equation}
        \begin{split}
            C_0 &:= P\\ C_i &:= K_i,\quad \forall 1\leq i\leq N
        \end{split}\label{equationcpn}
    \end{equation}
    is a $G$-central series of $P$ that satisfies Properties 1 and 2 of Conjecture \ref{conj1}, which would complete the proof for the case of non-cyclic $P/\gamma_2^G(P)$. This is because from \eqref{eqK}, \eqref{equationcpk}, \eqref{equationcpn}, and A1, we obtain: \begin{enumerate}[label=\arabic*)]
        \item $P = C_0\supseteq \gamma_2^G(P) = \gamma_2^G(H) = K_1 \supseteq K_2 = C_2\supseteq\ldots\supseteq K_N = C_N = \{e\}$,
        \item $[G, C_0] = [G, P]\subseteq \gamma_2^G(P) = \gamma_2^G(H) = K_1 = C_1$,
        \item $[G, C_i] = [G, K_i]\subseteq K_{i+1} = C_{i+1}$, $\forall 1\leq i\leq N - 1$,
        \item $C_0 = P$,
        \item $C_1 = K_1 = \gamma_2^G(H) = \gamma_2^G(P)$,
        \item $C_N = K_N = \{e\}$.
        \item $\log_p[C_i : C_{i+1}] = \log_p[K_i : K_{i+1}]\leq \mathcal{M}^G(H) - i + 1 \leq \mathcal{M}^G(P) - i + 1$, $\forall 1\leq i\leq N$.
    \end{enumerate}

\noindent\textbf{Case 2.} Consider the case when $H_1' \subsetneq P_1'$. We will prove that the sequence $C_i$, $0\leq i\leq N + 1$ of subgroups of $P$, defined as \begin{equation}
        \begin{split}
            C_0 &:= P\\ C_1 &:= \gamma_2^G(P)\\ C_i &:= K_{i-1},\quad \forall 2\leq i\leq N + 1
        \end{split}\label{equationcpm}
    \end{equation}
    is a $G$-central series of $P$ that satisfies Properties 1 and 2 of Conjecture \ref{conj1}, which would complete the proof for the case of cyclic $P/\gamma_2^G(P)$. This is because from \eqref{eqK}, \eqref{equationcpm}, A1, and A2, we obtain:\begin{enumerate}[label=\arabic*)]
        \item $P = C_0\supseteq \gamma_2^G(P) = C_1\supseteq\gamma_2^G(H) = K_1 = C_2\supseteq K_2 = C_3\supseteq\ldots\supseteq K_N = C_{N+1} = \{e\}$,
        \item $[G, C_0] = [G, P]\subseteq \gamma_2^G(P) = C_1$,
        \item $[G, C_1] = [G, \underbrace{\gamma_2^G(P)}_{\subseteq H}] \subseteq [G, H]\subseteq \gamma_2^G(H) = K_1 = C_2$,
        \item $[G, C_i] = [G, K_{i-1}]\subseteq K_i = C_{i+1}$, $\forall 2\leq i\leq N$,
        \item $C_0 = P$,
        \item $C_1 = \gamma_2^G(P)$,
        \item $C_{N + 1} = K_N = \{e\}$,
        \item $\log_p[C_1 : C_2] = \log_p[\gamma_2^G(P) : K_1] = \log_p[\gamma_2^G(P) : \gamma_2^G(H)]\leq \mathcal{M}^G(P)$,
        \item $\log_p[C_i : C_{i+1}] = \log_p[K_{i-1} : K_i]\leq \mathcal{M}^G(H) - (i-1) + 1\leq \mathcal{M}^G(P) - i + 1$, $\forall 2\leq i\leq N$.
    \end{enumerate}

This finishes the proof of Theorem \ref{th1}. \hfill $\square$

\end{document}